\documentclass[11pt,a4paper]{amsart}

\usepackage{verbatim}
\usepackage{setspace}
\usepackage[left=2.5cm,right=2.5cm,top=2.5cm,bottom=3cm,includeheadfoot]{geometry}
\usepackage{latexsym}
\usepackage{amssymb}
\usepackage{amsthm}
\usepackage{amsmath}
\usepackage{tikz}
\usetikzlibrary{matrix}
\usetikzlibrary{positioning}
\usetikzlibrary{arrows}
\usepackage{graphicx}
\usepackage[font=small,labelfont=bf]{caption}

\usepackage[all]{xy}

\usepackage{caption}

\usepackage{setspace}

\usepackage{enumitem}

\usepackage{amsfonts}
\usepackage[english]{babel}
\usepackage[latin1]{inputenc}

\numberwithin{figure}{section}
\numberwithin{table}{section}
\numberwithin{equation}{section}
\swapnumbers

\newtheorem{theorem}{Theorem}[section]
\newtheorem{lemma}[theorem]{Lemma}

\theoremstyle{definition}
\newtheorem{definition}[theorem]{Definition}
\newtheorem{example}[theorem]{Example}

\newtheorem{proposition}[theorem]{Proposition}
\theoremstyle{definition}
\newtheorem{remark}[theorem]{Remark}

\newtheorem{corollary}[theorem]{Corollary}

\newtheorem{question}[theorem]{Question}

\newcommand{\field}[1]{\mathbb{#1}}
\newcommand{\C}{\field{C}}
\newcommand{\R}{\field{R}}
\newcommand{\N}{\field{N}}
\newcommand{\Z}{\field{Z}}

\newcommand{\fh}{\mathfrak {h}}
\newcommand{\ft}{\mathfrak {t}}

\newcommand{\tham}{(M,\omega,\phi)}
\newcommand{\tka}{(M,\omega,J,\phi)}

\newcommand{\Mcore}{M_{\operatorname{core}}}

\newcommand{\Mexc}{M_{\operatorname{exc}}}
\newcommand{\Mtall}
{M_{\operatorname{tall}}}

\newcommand{\barphi}
{\bar{\phi}}

\newcommand{\tall}{{\operatorname{tall}}}

\def \CP {{\mathbb C}{\mathbb P}}

\newcommand{\mute}[1] {}

\begin{document}
\title[K\"ahler complexity one Hamiltonian $T$-manifolds  have trivial paintings]{\protect\boldmath K\"ahler complexity one Hamiltonian $T$-manifolds have trivial paintings}

\author{Isabelle Charton}
\address{School of Mathematics, Tel Aviv University, Israel}
\email{icharton@math.haifa.ac.il}

\author{Liat Kessler} 
\address{Department of Mathematics, Physics, and Computer Science, University of Haifa, Israel}
\email{lkessler@math.haifa.ac.il}

\author{Susan Tolman}
\address{Department of Mathematics,
University of Illinois,
Urbana, IL 61801, USA}
\email{tolman@illinois.edu}

	\begin{abstract}
 Let a torus $T$ act on a symplectic manifold $(M,\omega)$ with moment map $\phi$.
We say that the Hamiltonian $T$-manifold $(M,\omega,\phi)$ has complexity one if $\frac{1}{2} \dim M - \dim T = 1$, and that it is K\"ahler if it admits an invariant compatible complex structure.
In this paper, we show how the class of K\"ahler complexity one Hamiltonian $T$-manifolds sits
inside the class of complexity one Hamiltonian $T$-manifolds by proving that every compact, connected K\"ahler complexity one Hamiltonian $T$-manifold  has a trivial painting.
As a corollary, we show that  two tall compact, connected K\"ahler  complexity one Hamiltonian $T$-manifolds
are symplectomorphic exactly if they have the same genus,
Duistermaat-Heckman measure, and skeleton.
Here,  $(M,\omega,\phi)$ is tall exactly if every non-empty fiber $\phi^{-1}(\alpha)$ contains more than one orbit.
\end{abstract}

\subjclass[2010]{Primary 53D35; Secondary 53D20,57S15 }

\maketitle

\section{Introduction}

Once and for all, fix a compact torus $T$  with Lie algebra $\mathfrak{t}$.
Let $M$ be a manifold and $\omega$ be a {\bf symplectic form} on $M$,
that is,
a closed, non-degenerate $2$-form $\omega \in \Omega^2(M)$.
A $T$-action\footnote{Unless specified otherwise, we always assume that our actions are effective.} on  $(M,\omega)$  is 
\textbf{Hamiltonian} if there 
exists a \textbf{moment map}, i.e., a  map $\phi \colon M \rightarrow \mathfrak{t}^*$  such that
\begin{align*}
\iota_{X_\zeta}\omega = -\operatorname{d}\left\langle \phi ,\zeta \right\rangle\quad \text{for all $\zeta \in \ft$.}
\end{align*} 
 Here, $X_{\zeta}$ is the vector field generated by $\zeta$ and $\left\langle 
\cdot,\cdot\right\rangle $ is the natural pairing between $\mathfrak{t}^*$ and $\mathfrak{t}$. In this case, we  call $\tham$ a \textbf{Hamiltonian $\mathbf T$-manifold}.
The \textbf{complexity} of $\tham$ is the non-negative integer
$$k:= \textstyle \frac{1}{2}\operatorname{dim}(M)-\operatorname{dim}(T).$$

A \textbf{complex structure} on $M$ is 
an automorphism $J \colon TM \to TM$ with $J^2 = - \operatorname{Id}$ that is induced from a holomorphic atlas on $M$.  A complex structure $J$ is \textbf{compatible} with the symplectic form $\omega$ if $\omega(\cdot,J \cdot)$ is a Riemannian metric on $M$;  $(M,\omega,J)$ is then a {\bf K\"ahler manifold}.   For us, a {\bf K\"ahler Hamiltonian $\mathbf T$-manifold} is a Hamiltonian
$T$-manifold $(M,\omega,\phi)$ equipped with a compatible $T$-invariant complex structure $J$.

In 1976, Thurston constructed the first example of a symplectic manifold with no K\"ahler structure \cite{Thurston}.
Since then, many of the important questions in symplectic geometry have arisen from exploring the similarities and differences between K\"ahler and symplectic geometry.
One aspect of this program is adapting powerful theorems from K\"ahler geometry so that they still hold in the symplectic category.  Another aspect is determining which symplectic manifolds admit K\"ahler structures; see, e.g., \cite{Gompf,McDuff}. 
In this paper, and a proposed follow-up, we seek to determine which
complexity one Hamiltonian $T$-manifolds admit K\"ahler structures.

It follows from Delzant's classification of compact complexity zero Hamiltonian $T$-manifolds  that any such manifold admits a $T$-invariant complex structure that is compatible with the symplectic form \cite{Delzant}.
Karshon   has shown that this property holds for
 any compact complexity one Hamiltonian $T$-manifold of 
dimension four \cite[Theorem 7.1]{Karshon4}.  However, Tolman gave an example of a six-dimensional complexity one Hamiltonian $T$-manifold with only isolated fixed points that does not admit a compatible $T$-invariant complex structure \cite{TolmanExample}; see also \cite{Lerman}.

In this paper, we give a necessary condition for the existence of a $T$-invariant compatible complex structure for complexity one Hamiltonian $T$-manifolds of any dimension, in terms of one of the discrete invariants of complexity one spaces introduced in \cite{Tall2, Tall3}. We conjecture that this condition is also sufficient. We plan to explore this question in a subsequent paper using the classification of complexity one  $T^\C$-actions on algebraic varieties in \cite{AH,AHS} and \cite{Timashev}. 
To state our theorem precisely, we recall some invariants from Karshon-Tolman's analysis of complexity one Hamiltonian $T$-manifolds.

Let $\tham$ be a complexity one Hamiltonian $T$-manifold.  Given $p \in M$,
we say that $p$ is {\bf short} if
$[p]$ is an isolated point in the reduced space $\phi^{-1}(\phi(p))/T$,
and {\bf tall} otherwise.
Additionally, we say that $p \in M$ is {\bf exceptional} if every point in some neighborhood
of $[p]$ in $\phi^{-1}(\phi(p))/T$ has a strictly smaller stabilizer.
By definition,  every short point is exceptional.
Let $M_\tall \subseteq M$ and $\Mexc \subseteq M$ be the set of tall and exceptional points in $M$, respectively. The quotient $(\Mexc \cap M_\tall)/T$ is the  {\bf skeleton} of $M$.

Now assume that $M$ is compact and connected. 
By \cite[Proposition 1.2]{Tall3}, 
 there exists  a {\bf genus} $g \in \N := \Z_{\geq 0}$  so that for each  $\alpha \in \phi(M_\tall)$  the reduced space $\phi^{-1}(\alpha)/T$ is a compact  oriented surface of genus $g$; fix such a surface $\Sigma$.  
A \textbf{painting} is a map $f \colon (\Mexc \cap M_\tall)/T\rightarrow \Sigma$  such that
$$(\bar{\phi}, f) \colon (\Mexc \cap M_\tall)/T \longrightarrow \phi(M_\tall) \times \Sigma $$
is injective, where $\bar{\phi} \colon M/T \to \ft^*$  is the orbital moment map. Paintings $f$ and $f'$ are {\bf equivalent} if there exists an orientation-preserving homeomorphism  $\nu \colon \Sigma \to \Sigma$ such that $f$  and $\nu \circ f'$ are homotopic through paintings. 
Moreover, a  painting $f$ is {\bf associated to {$\mathbf M$}} 
if  it is the restriction to
$(\Mexc \cap M_\tall)/T$ of  
 a map $F \colon M_{\tall}/T \rightarrow \Sigma$  that induces a homeomorphism
$$(\bar{\phi}, F) \colon M_\tall/T  \longrightarrow \phi(M_\tall)\times \Sigma, $$  
and that preserves the orientation on each reduced space $\phi^{-1}(\alpha)/T$.
By \cite[Proposition 1.2]{Tall3}, the set of paintings associated to $M$ is non-empty, and all such paintings are equivalent.
Finally, a painting is {\bf trivial} if it is equivalent to a locally constant painting.
We can now state our main theorem.

\begin{theorem}\label{MainThm}
Let $\tham$ be a compact, connected complexity one Hamiltonian $T$-manifold. If $M$ admits a $T$-invariant complex structure that is compatible 
with the symplectic form $\omega$, then the paintings associated to $M$ are trivial.
\end{theorem}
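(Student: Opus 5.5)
The idea is to use the complex structure to build a map $F \colon M_\tall/T \to \Sigma$ that takes each exceptional tall orbit to one of finitely many points of $\Sigma$, is constant along each connected component of the skeleton, and still induces a homeomorphism $(\bar\phi,F)$. Such an $F$ restricts to a locally constant painting associated to $M$. Since all paintings associated to $M$ are equivalent by \cite[Proposition 1.2]{Tall3}, the theorem follows.

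The construction goes through the complexified torus $T^\C$. Since $J$ is $T$-invariant and $M$ is compact, the $T$-action extends to a holomorphic action of $T^\C$. Because the complexity is one, generic $T^\C$-orbits have complex codimension one. The first step is to take the rational quotient: the field of $T^\C$-invariant meromorphic functions on $M$ has transcendence degree one, which gives a compact Riemann surface $C$ together with a $T^\C$-invariant meromorphic map $\pi \colon M \dashrightarrow C$ whose generic fibres are closures of $T^\C$-orbits.

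Next we compare $C$ with the reduced spaces via Kempf--Ness. For $\alpha$ in the interior of $\phi(M_\tall)$, the reduced space $\phi^{-1}(\alpha)/T$ is homeomorphic to the GIT quotient $M^{ss}_\alpha /\!/ T^\C$ of the $\alpha$-semistable points. This quotient is a compact complex curve, and it maps to $C$ through $\pi$, isomorphically because $\pi$ separates generic orbits. The resulting map $F_\alpha \colon \phi^{-1}(\alpha)/T \to C$ is holomorphic, so it is orientation preserving, and by \cite[Proposition 1.2]{Tall3} $C$ has genus $g$. Assembling the $F_\alpha$ gives $F \colon M_\tall/T \to C \cong \Sigma$. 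For continuity in $\alpha$ and for the homeomorphism property of $(\bar\phi,F)$ we use that $\pi$ is $T^\C$-invariant, so $F$ is defined directly on $M_\tall$, wherever $\pi$ is defined, without reference to $\alpha$.

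The remaining step is to show that $F$ is locally constant on the skeleton $(\Mexc\cap M_\tall)/T$. The points of $\Mexc$ have nontrivial stabilizer $H$ strictly larger than that of nearby points in their reduced space. The subtorus-type fixed set $M^H$ is a complex submanifold, and so is the set of points with stabilizer exactly $H$. Its tall exceptional part consists of $T^\C$-orbits of dimension strictly less than generic and lies over finitely many points of $C$: an exceptional tall orbit is isolated in its reduced space with that stabilizer, so the corresponding component of $M^H$ has complexity zero for the $T/H$-action. Hence $\pi$ restricted to each connected component of $M^H\cap\Mexc\cap M_\tall$ is a holomorphic map whose image has dimension zero, and is therefore constant.

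The main obstacle is the indeterminacy of $\pi$. A meromorphic map need not be defined everywhere, and $F$ must be defined and continuous on all of $M_\tall/T$, including non-generic tall orbits. I would first try to resolve $\pi$ by a $T$-equivariant blow-up $\tilde M \to M$ and then check that the indeterminacy locus lies in the short points, plausibly because tall orbits have $T^\C$-orbit closures that meet only one fibre. Failing that, I would define $F$ fibrewise using the GIT quotients and check that the wall-crossings in $\alpha$ do not change the identification with $C$.
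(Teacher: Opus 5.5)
\textbf{Verdict: there is a genuine gap.} Your reduction is sound and differs from the paper's route. Suppose you had a map $F\colon M_\tall/T\to\Sigma$ that is constant on $T^\C$-orbits and for which $(\barphi,F)$ is an orientation-preserving homeomorphism onto $\Delta_\tall\times\Sigma$. There are only finitely many exceptional $T^\C$-orbits (your $M^H$ argument is essentially Lemma~\ref{finiteexceptional}), so $F$ would take finitely many values on the skeleton and, being continuous, would be locally constant there. The problem is that you never establish such an $F$. The step that carries the whole proof is exactly what you call ``the main obstacle'' and leave as two tentative strategies. That step is: the rational quotient $\pi$ is defined and continuous at \emph{every} tall point, and $(\barphi,\pi)$ is bijective on every reduced space $\phi^{-1}(\alpha)/T$ with $\alpha\in\Delta_\tall$. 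This is not a technicality. The tall orbits where $\pi$ might be indeterminate are the non-generic ones, which include the skeleton orbits themselves, so both the continuity of $F$ and your local-constancy conclusion depend on it. Your heuristic that tall orbit closures ``meet only one fibre'' already assumes $\pi$ is defined there. Your fibrewise fallback must control wall-crossing in $\alpha$, which is where exceptional orbits sit, so it is the same problem. Two further inputs are also missing:
\begin{itemize}
\item Your Kempf--Ness step is stated only for $\alpha$ in the interior of $\Delta_\tall$, but $\Delta_\tall$ need not be open. In Example~\ref{examp:2}, $\Delta_\tall=[0,2)$ and the reduced space over $0$ is the fixed sphere $N$.
\item The existence of $\pi$ with generic fibres equal to orbit closures is Rosenlicht's theorem for varieties. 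For a compact K\"ahler $M$ it needs an argument, for instance projectivity of $M$ or Fujiki's meromorphic quotients.
\end{itemize}

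\textbf{How the gap could be filled.} The missing idea is local and uses tallness as in Lemma~\ref{lem:KT-pos-combo}.
\begin{itemize}
\item At a tall $p$, work in Sjamaar's slice $T^\C\times_{H^\C}S$ (Theorem~\ref{reyer}). Complexifying the exact sequence gives $H^\C=\ker P$ with $P(z)=\prod z_i^{\xi_i}$ and all $\xi_i\geq 0$.
\item So $P$ is holomorphic, and its level sets in $(\C^\times)^{h+1}$ are single $H^\C$-orbits.
\item The indeterminacy set of $\pi$ is invariant and of codimension at least two. Hence it misses $(\C^\times)^{h+1}$ and generic points of $\{P=0\}$.
\item Therefore $\pi=g\circ P$ with $g$ holomorphic on a disc, so $\pi$ is holomorphic on $\Mtall$.
\end{itemize}
At short points some $\xi_i<0$, which matches the indeterminacy of $\pi$ at $p_0,p_2$ in Example~\ref{examp:1}. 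You would then combine this with:
\begin{itemize}
\item the analytic Kempf--Ness theorem of Heinzner--Loose, for \emph{every} $\alpha\in\Delta_\tall$;
\item the fact that $\alpha$-semistable points are tall, which follows from $\phi^{-1}(\Delta_\tall)=\Mtall$, openness of $\Mtall$, and Lemma~\ref{lem:exporbit};
\item generic injectivity of $\pi$ on the quotient;
\item properness of $\barphi$ over $\Delta_\tall$.
\end{itemize}
Together these make $(\barphi,\pi)$ a homeomorphism.

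\textbf{Comparison with the paper.} The paper constructs no quotient curve at all. Using the local cone decomposition (Lemmas~\ref{cor:linear}, \ref{local}, \ref{homeotall}, \ref{finite2}) and Ho's theorem, it shows that each component of $(\overline{\Mcore}\cap\Mtall)/T$ is a section of $\barphi$ over $\Delta_\tall$. It then contracts $\Delta_\tall$ to homotope the \emph{given} associated painting, through paintings, to a locally constant one. Your route, once completed, would give the slightly stronger statement that some associated painting is already locally constant. The cost is much heavier complex-analytic machinery.
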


\begin{remark}\label{rem1}
There are many examples of complexity one spaces with non-trivial paintings. For example, if the orbital moment map $\overline{\phi} \colon S_0 \to \ft^*$ is not injective for some connected component $S_0$ of the skeleton $(\Mexc \cap M_\tall)/T$,
 then the painting cannot be trivial.  This holds, in particular, for the complexity one Hamiltonian $T$-manifold constructed in \cite{TolmanExample}, and so Theorem~\ref{MainThm} gives an alternate proof that it admits no compatible $T$-invariant complex structure.  See also Remark~\ref{examples}. 
\end{remark}

 To explain the implications of Theorem~\ref{MainThm} we recall Karshon and Tolman's classification of complexity one Hamiltonian manifolds. 
Let $(M,\omega,\phi)$ and $(M',\omega',\phi')$ be compact, connected complexity one
Hamiltonian $T$-manifolds. A
 homeomorphism $g \colon (\Mexc \cap M_\tall)/T \to (\Mexc' \cap M'_\tall)/T$
 which  intertwines the orbital moment
maps $\overline{\phi}$ and $\overline{\phi'}$ is an {\bf isomorphism of skeletons} if it
 takes every orbit in $(\Mexc \cap M_\tall)/T$ to an orbit in $(\Mexc' \cap M'_\tall)/T$ with the same stabilizer and symplectic slice representation. (Recall that the symplectic slice representation at  $p \in M$ 
is the representation of the stabilizer $H$ of $p$ on the symplectic vector space $T_p {{(T \cdot p)}^\omega}/T_p {(T \cdot p)}$; see Definition \ref{def:sym-slice}.)
In \cite{Tall1,Tall2, Tall3}, Karshon and Tolman  prove that if $M$ and $M'$ are tall, then  there is an equivariant symplectomorphism from $M$ to $M'$ that intertwines
the moment maps if and only if they have the same genus and the same Duistermaat-Heckman measure,
and there is an isomorphism of skeletons $g \colon (\Mexc \cap M_\tall)/T \to (\Mexc' \cap M'_\tall)/T$ such that the pull-back of a painting associated to $M'$
is a painting associated to $M$.
Here,  {\bf Duistermaat-Heckman measure} is the measure on $\mathfrak{t}^{*}$ obtained as the push-forward by the moment map of the {\bf Liouville measure} on $M$, which  itself is given by integrating the volume
form ${\omega^n}/{n!}$ with respect to the symplectic orientation. 
Therefore, Theorem~\ref{MainThm} has the following corollary.

\begin{corollary}\label{MainCor}
Let $(M,\omega,\phi)$ and $(M',\omega',\phi')$ be compact, connected tall complexity one
Hamiltonian $T$-manifolds.
If $M$ and $M'$ both admit compatible $T$-invariant complex structures, then $M$ and $M'$
are equivariantly symplectomorphic if and only if they have the same genus and Duistermaat-Heckman measure, and isomorphic 
skeletons.
\end{corollary}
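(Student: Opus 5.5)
The statement to prove is Corollary~\ref{MainCor}; it will follow formally from Theorem~\ref{MainThm} together with the Karshon--Tolman classification recalled just above.

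The ``only if'' direction is routine. An equivariant symplectomorphism $\Psi\colon M\to M'$ intertwining the moment maps preserves the Liouville measure, and hence the Duistermaat--Heckman measure. It maps each fiber $\phi^{-1}(\alpha)$ to $\phi'^{-1}(\alpha)$, so it induces homeomorphisms of reduced spaces; therefore the genera agree. It also preserves stabilizers and symplectic slice representations, and it preserves the notions of short, tall and exceptional points, since these are defined intrinsically in terms of reduced spaces and stabilizers. Consequently $\Psi$ descends to an isomorphism of skeletons. (If ``equivariantly symplectomorphic'' is not meant to include intertwining moment maps, I would adopt the convention of the classification, in which it does.)

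For the ``if'' direction, let $g\colon (\Mexc\cap M_\tall)/T\to(\Mexc'\cap M'_\tall)/T$ be an isomorphism of skeletons. By the classification, it is enough to show that for a painting $f'$ associated to $M'$, the pull-back $f'\circ g$ is a painting associated to $M$. By Theorem~\ref{MainThm}, both $M$ and $M'$ have trivial associated paintings. So, after replacing $f'$ by an equivalent painting and using that all paintings associated to $M'$ are equivalent, we may take $f'$ locally constant. A subtlety here is that the classification is phrased for paintings that are literally associated; I will use that the set of associated paintings is closed under equivalence, which is part of \cite[Proposition 1.2]{Tall3} or follows by composing $F$ with homeomorphisms and homotopies. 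Then $f'\circ g$ is locally constant. It is still a painting: $g$ intertwines $\bar\phi$ and $\bar\phi'$, and $(\bar\phi',f')$ is injective, so $(\bar\phi,f'\circ g)=(\bar\phi',f')\circ g$ is injective.

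The key step, and the main obstacle, is to show that any two locally constant paintings $f_0,f_1$ on the same skeleton $S$ are equivalent. Both are paintings, so on each component of $S$ the map $\bar\phi$ is injective whenever two components share a value. I would choose an orientation-preserving homeomorphism $\nu$ of $\Sigma$ and an isotopy carrying the finitely many values of $f_1$ to those of $f_0$ componentwise. This requires distinct points of $\Sigma$ to be moved along paths that never collide on pairs of components whose $\bar\phi$-images intersect. It is possible because the configuration space of distinct labeled points on a connected surface is connected. I would move all points simultaneously along such a path, keeping the values on different components distinct throughout; the resulting maps are locally constant with distinct values on distinct components, so injectivity of $(\bar\phi,\cdot)$ is preserved at every time. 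The only delicate case is when two components of $S$ were already given the same value by $f_0$ or by $f_1$. To handle it, I first perturb within paintings, separating equal values on components whose $\bar\phi$-images are disjoint; this is allowed since separating values never creates collisions. After that, all values are distinct and the configuration-space argument applies, with $\nu=\mathrm{id}$.

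Hence $f'\circ g$ is equivalent to a painting associated to $M$, so it is itself associated to $M$, and the classification yields the symplectomorphism.
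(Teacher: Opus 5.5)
Your proposal is correct and is the argument the paper leaves implicit when it simply says the corollary follows from Theorem~\ref{MainThm} and the Karshon--Tolman classification; your configuration-space argument supplies the unstated step that any two locally constant paintings on the same skeleton are homotopic through paintings. That argument uses that the skeleton has finitely many components, which holds here by Lemma~\ref{finiteexceptional}. One caution: closure of the set of associated paintings under homotopy through paintings is an isotopy-extension statement rather than a formality, so rely on \cite{Tall2,Tall3} for it rather than on ``composing $F$ with homotopies.''
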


\begin{remark}\label{examples} In general, complexity one Hamiltonian $T$-manifolds with the same genus and Duistermaat-Heckman measure and isomorphic skeletons can have non-equivalent paintings.
 For example, for any $g > 0$,  in \cite[Example 1.11]{Tall3} the authors construct a countably infinite family of six-dimensional tall complexity one Hamiltonian $T$-manifolds of genus $g$ with the same Duistermaat-Heckman measure and isomorphic skeletons but non-equivalent paintings. Similarly,  in \cite[Example 1.12]{Tall3} they construct a countably infinite family of $8$-dimensional  tall complexity one Hamiltonian $T$-manifolds of genus $0$  with the same properties.  By Corollary~\ref{MainCor}, at most one of the Hamiltonian $T$-manifolds in each family admits a compatible invariant complex structure.
 
\end{remark}

Karshon and Tolman have conjectured that their classification can be generalized as follows:
If $(M,\omega,\phi)$ and $(M',\omega',\phi')$ are any compact, connected complexity one $T$-spaces, then  there is an equivariant symplectomorphism from $M$ to $M'$ that intertwines
the moment maps if and only if they have the same genus and the same Duistermaat-Heckman measure,
and there is an isomorphism from the skeleton of $M$ to the skeleton of $M'$ such that the pull-back of a painting associated to $M'$
is a painting associated to $M$. If this conjecture is true, then Corollary~\ref{MainCor} also extends to the case that $M$ and $M'$ are not tall.

It is natural to ask if the converse to Theorem~\ref{MainThm} is true.

\begin{question}\label{Question}
Let $\tham$ be a compact, connected complexity one Hamiltonian $T$-manifold. Assume that the paintings associated to $\tham$ are trivial. Does $M$ necessarily admit a $T$-invariant complex structure that is compatible with the symplectic form $\omega$? 
\end{question}

 In \cite{LPT}, Liu, Palmer, and Tolman  prove that a tall, compact, simply connected complexity one Hamiltonian $T$-manifold $\tham$
extends to a complexity zero torus action exactly if the skeleton is  ``two-colorable" and the associated painting is trivial.   
By \cite{Delzant}, this implies that the answer to Question~\ref{Question} is affirmative in this setting. 
By \cite[Theorem 7.1]{Karshon4}, the answer is affirmative
(and the painting is always trivial) in dimension four.
We plan to address Question~\ref{Question} more generally in our next paper, in part by detailing the relationship between the classification of Hamiltonian $T$-manifolds in \cite{Tall1, Tall2, Tall3} and the  classification of
 complexity one  $T^\C$-actions on algebraic varieties  in \cite{AH,AHS} and \cite{Timashev}.

The structure of the paper is as follows. In Sections \ref{sec:2} and \ref{ss:holomorphic}, we give the required preliminaries on complexity one Hamiltonian torus actions and on K\"ahler torus actions, respectively. 
 Given a compact K\"ahler complexity one Hamiltonian $T$-manifold
$\tka$, let $\Mcore$ be the union of all the $T^\C$-orbits whose closures contain tall exceptional points. In Section~\ref{sec:mcore}, we prove that
the orbital moment map restricts to a local homeomorphism
\begin{equation*} 
    \bar{\phi} \colon {(\overline{\Mcore} \cap \Mtall)}/T \to \phi(\Mtall).
\end{equation*} 
In Section \ref{sec:proof}, we prove that this implies Theorem \ref{MainThm}.

\subsection*{Acknowledgements}
The authors thank Yael Karshon, Reyer Sjammar, and Hendrik S\"uss
for answering our many questions.
The second author was supported in part by 
an NSF-BSF Grant 2021730. 
The third author was supported in part by Simons Foundation, Grant 637996, and
by the National Science Foundation under Grant
DMS-2204359.

\section{Complexity one Hamiltonian torus actions}\label{sec:2}

In this section, we introduce the facts about complexity one Hamiltonian torus actions that we will need in our argument.
We begin with a brief review of Hamiltonian torus actions (of arbitrary complexity);
 recall that $T$ is a compact torus.

\begin{definition}\label{def:sym-slice} 
Let $(M,\omega,\phi)$ be a Hamiltonian $T$-manifold.
Fix $p \in M$ and let $H$ be the stabilizer of $p$ in $T$ and
let $\mathfrak O := T \cdot p$ be the $T$-orbit through $p$. 
 Then $T_p \mathfrak O$ is isotropic and so the {\bf  symplectic slice} $T_p \mathfrak O^\omega/T_p \mathfrak O$ inherits a natural symplectic structure. 
The $H$-action on $T_p M$ induces a symplectic representation
on  $T_p \mathfrak O^\omega/T_p \mathfrak O$,
called the {\bf (symplectic) slice representation}.
 By choosing an invariant compatible complex structure, we may
think of the slice representation as a complex representation;  up to isomorphism, the complex representation does not depend on the choice of such structure.
The slice representation is isomorphic to the $H$-action on $\C^{k}$ given by an inclusion
\begin{align*}
    \rho=(\rho_1,\dots, \rho_{k})\colon  
     H \hookrightarrow (S^1)^{k},
    \end{align*}
where $k = \frac{1}{2} \dim M - \dim T + \dim H.$
The weights of this representation,  which we call the {\bf isotropy weights} at $p$, are
the  differentials at the identity of $H$ of the components $\rho_i\colon H \rightarrow S^1$. 
\end{definition}

From now on we fix an  inner product on the Lie algebra $\mathfrak t$,
and use it to fix a canonical identification of $\ft^*$ with
$\fh^\circ \oplus \fh^*$ for any subspace $\fh \subseteq \ft$, where
$\fh^\circ \subseteq \ft^*$ is the annihilator of $\fh$.
Given a closed   subgroup $H \subseteq T$ and an inclusion $\rho \colon H \hookrightarrow (S^1)^k$ for some $k \in \N$, 
set $Y := T \times _H \mathfrak{h}^\circ \times  \C^{k}$, where $H$ acts on $T \times \mathfrak{h}^\circ \times \C^k$ by $h \cdot (t,\alpha, z) = (t h^{-1}, \alpha, \rho(h) z)$. The manifold  $Y$
can be endowed with a canonical symplectic form $\omega_Y$  so that the $T$-action on $Y$ given by
$s\cdot [t, \alpha, z]= [st,\alpha,z]$ is Hamiltonian with homogeneous moment map
\begin{equation}\label{eq:phiY}
\phi_Y \colon Y \to \ft^*,\,\,\, [t,\alpha,z] \mapsto \alpha + \sum \frac{1}{2} \eta_i |z_i|^2. 
\end{equation}
Given $p\in M$ with stabilizer $H$ and symplectic slice representation $\rho$, we call $(Y, \omega_Y, \phi_Y)$
the {\bf local model at $\mathbf p$}.  
By the {\bf (symplectic) local normal form theorem} below,  the manifold near a point $p$ is determined by the local model at $p$ \cite{GSNormalForm, Marle}.

\begin{theorem}[Guillemin-Sternberg, Marle] \label{thm:lnf}
 Let $(M,\omega,\phi)$ be a Hamiltonian $T$-manifold.
Given $p\in M$, let $(Y, \omega_Y, \phi_Y)$ be the local model at $p$.
There exists a $T$-equivariant symplectomorphism from
a $T$-invariant neighborhood of $p$ in $M$ to a neighborhood of $[1,0,0]$ in $Y$
that takes $p$ to $[1,0,0]$.

\end{theorem}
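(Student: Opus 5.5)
The plan is the standard proof of the local normal form: first build an equivariant diffeomorphism onto a neighborhood in $Y$, then correct it to a symplectomorphism with an equivariant Moser argument. Set $H=\operatorname{Stab}(p)$ and $\mathfrak O = T\cdot p$. Choose a $T$-invariant Riemannian metric on $M$, or better, a $T$-invariant almost complex structure $J$ compatible with $\omega$, obtained by averaging a metric and applying polar decomposition. The relevant structure is $T_pM$ as an $H$-representation, which we decompose as follows:
\[ T_pM = T_p\mathfrak O \oplus J(T_p\mathfrak O) \oplus V, \qquad V := T_p\mathfrak O^\omega \cap \bigl(J T_p\mathfrak O\bigr)^{\perp}. \]
Since $\phi$ is $T$-invariant for a torus, $\phi$ is constant on the orbit. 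Hence $T_p\mathfrak O\subseteq T_p\mathfrak O^\omega$, and $V$ is an $H$-invariant symplectic complement of $T_p\mathfrak O$ in $T_p\mathfrak O^\omega$, so it is $H$-equivariantly symplectomorphic to the slice $\C^k$ with representation $\rho$. The identifications are $T_p\mathfrak O\cong \ft/\fh$ and $J T_p\mathfrak O\cong (\ft/\fh)^*\cong\fh^\circ$, the latter via $v\mapsto \omega(v,\cdot)$. The dimension count $\dim \ft/\fh + \dim\fh^\circ + 2k = \dim M$ matches $\dim Y$.

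Next, construct an equivariant map $\Psi\colon Y\to M$ with $[1,0,0]\mapsto p$. Use the exponential map of the invariant metric, $\Psi([t,\alpha,z]) = t\cdot \exp_p(\iota(\alpha)+ z)$, where $\iota\colon \fh^\circ\to J T_p\mathfrak O$ and $\C^k\cong V$ are the linear identifications above. This is well defined because $\exp_p$ is $H$-equivariant. By the slice theorem its differential at $[1,0,0]$ is an isomorphism, so $\Psi$ is a $T$-equivariant diffeomorphism from a neighborhood $U$ of the orbit $T\times_H\{0\}\times\{0\}$ onto a $T$-invariant neighborhood of $\mathfrak O$. Now compare $\omega_1:=\Psi^*\omega$ and $\omega_0:=\omega_Y$ along the zero section $Z=T\times_H\{0\}\times\{0\}$. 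Adjusting the linear identification of $\fh^\circ$ with $JT_p\mathfrak O$ if necessary, which is a pairing choice, both forms agree at $[1,0,0]$. By $T$-invariance they then agree at every point of $Z$, since every point of $Z$ is a $T$-translate of $[1,0,0]$.

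Finally, apply the equivariant relative Moser trick. On a $T$-invariant tubular neighborhood of $Z$ that retracts equivariantly onto $Z$ (take $U$ star-shaped in the $(\alpha,z)$ fibers), the form $\omega_1-\omega_0$ is closed and vanishes on $Z$. The relative Poincaré lemma, with homotopy operator built from fiber scaling, gives a $T$-invariant $1$-form $\beta$ vanishing on $Z$ with $d\beta=\omega_1-\omega_0$. Averaging over $T$ preserves invariance since the operator commutes with the action. The family $\omega_s = \omega_0 + s(\omega_1-\omega_0)$ is nondegenerate near $Z$, and we set $\iota_{X_s}\omega_s = -\beta$. The vector field $X_s$ vanishes on $Z$ and is $T$-invariant, so its flow exists up to time $1$ on a smaller invariant neighborhood (compactness of $Z$) and fixes $Z$. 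The time-$1$ map $g$ satisfies $g^*\omega_1=\omega_0$ and is equivariant. Then $\Psi\circ g$ is a $T$-equivariant symplectomorphism between neighborhoods with $[1,0,0]\mapsto p$, and inverting it gives the statement.

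The main obstacle is the pointwise agreement $\omega_1=\omega_0$ at $[1,0,0]$, which requires checking how $\omega_Y$ pairs the $\ft/\fh$ and $\fh^\circ$ directions and how it restricts on $\C^k$. The choice $V\perp JT_p\mathfrak O$ together with compatibility of $J$ makes the cross terms vanish, but it must be verified that $\omega(T_p\mathfrak O, V)=0$ and $\omega(J T_p\mathfrak O, V)=0$. The first holds because $V\subseteq T_p\mathfrak O^\omega$. The second holds because $\omega(Ju,v)=g(u,v)$ up to sign, and $V\subseteq T_p\mathfrak O^\omega$ is equivalent to $V\perp JT_p\mathfrak O$. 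If an exact match is awkward, a linear symplectic algebra step instead replaces $d\Psi_p$ by a composition with an $H$-equivariant linear symplectic isomorphism, which exists by equivariant Witt-type normal form. Remarkably, the moment maps then automatically correspond up to a constant fixed by $\phi(p)$, because the action on $\mathfrak O$'s neighborhood is connected and both moment maps generate the same action.
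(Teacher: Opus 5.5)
You follow the standard proof: an equivariant exponential map $\Psi$ from $Y$, matching $\Psi^*\omega$ with $\omega_Y$ along the zero section $Z$, then an equivariant relative Moser argument with the fiber-scaling homotopy. That strategy is sound. The paper gives no proof to compare against; it only cites Guillemin--Sternberg and Marle.

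However, the linear-algebra step you flagged as the main obstacle is wrong as written. With $g=\omega(\cdot,J\cdot)$ one has $\omega(u,v)=g(Ju,v)$, so $T_p\mathfrak O^\omega=(JT_p\mathfrak O)^\perp$ exactly. Hence your $V=T_p\mathfrak O^\omega\cap(JT_p\mathfrak O)^\perp$ is all of $T_p\mathfrak O^\omega$, with these consequences:
\begin{itemize}
\item it contains $T_p\mathfrak O$ and has dimension $\dim M-\dim\mathfrak O$ rather than $2k$;
\item the sum $T_p\mathfrak O\oplus JT_p\mathfrak O\oplus V$ is not direct;
\item $V$ cannot be identified with $\C^k$, so $d\Psi_{[1,0,0]}$ is not an isomorphism.
\end{itemize}
Your check that $\omega(JT_p\mathfrak O,V)=0$ also conflates two conditions. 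Since $\omega(Ju,v)=-g(u,v)$, this vanishing requires $V\perp T_p\mathfrak O$, not $V\perp JT_p\mathfrak O$. With your $V$, taking $v=u\in T_p\mathfrak O$ gives $\omega(Ju,u)=-g(u,u)\neq 0$.

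The repair is to take $V:=(T_p\mathfrak O+JT_p\mathfrak O)^\perp=T_p\mathfrak O^\omega\cap(T_p\mathfrak O)^\perp=T_p\mathfrak O^\omega\cap J(T_p\mathfrak O^\omega)$. This is the same subspace that appears at the top of the diagram in the proof of Lemma~\ref{fact:comp}. It is $H$- and $J$-invariant, $\omega$-orthogonal to both $T_p\mathfrak O$ and $JT_p\mathfrak O$, and projects $H$-equivariantly and symplectically onto $T_p\mathfrak O^\omega/T_p\mathfrak O$.

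You should also record two facts about $JT_p\mathfrak O$:
\begin{itemize}
\item It is isotropic, since $\omega(Ju,Ju')=\omega(u,u')=0$. This is needed for $\omega_p$ to have the same block form as $\omega_Y$ at $[1,0,0]$.
\item $H$ acts trivially on it, because $T$ is abelian and $J$ is invariant. This is needed, beyond $H$-equivariance of $\exp_p$, for $\Psi$ to be well defined on $T\times_H(\fh^\circ\times\C^k)$.
\end{itemize}
Choosing $\iota$ so that $d\phi_p(\iota(a))=a$, i.e.\ $\omega(X_\xi(p),\iota(a))=-\langle a,\xi\rangle$ for all $\xi\in\ft$, then makes the two forms agree at $[1,0,0]$. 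The rest of your argument goes through unchanged.
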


\begin{definition}\label{def:moment cone}
Let $(M,\omega,\phi)$ be a Hamiltonian $T$-manifold. Given a point $p \in M$ with stabilizer $H$ and isotropy weights $\eta_1, \dots \eta_k$; let $(Y, \omega_Y, \Phi_Y)$ be the local model at $p$. 
The {\bf moment cone at $\mathbf p$} is the closed polyhedral cone 
$$ \overline{C_p} :=  \phi(p) + \Phi_Y(Y) =  \phi(p) + \fh^{\circ} +  \sum_{i=1}^{k} \R_{\geq 0} \eta_i.$$
 As the notation suggests, the moment cone $\overline{C_p}$ is the closure of its relative interior
$$ C_p :=  \phi(p) + \fh^{\circ} +  \sum_{i=1}^{k} \R_{> 0} \eta_i.$$
\end{definition}

The symplectic local normal form theorem is a key ingredient in the theorem below,
which  is a special case of Sjamaar's theorem \cite[Theorem 6.5]{Sjamaar};
see also \cite[Theorems 6.1 and 6.2]{LMTW}.

\begin{theorem}\label{le:imagecone}  Let $(M,\omega,\phi)$ be a Hamiltonian $T$-manifold  with proper moment map.
Given a point $p \in M$ with moment cone $\overline{C_p}$,
there exists a neighborhood $V$ of $\phi(p)$ in $\ft^*$ such  that
$$\phi(M) \cap V = 
 \overline{C_p}  \cap V.$$
\end{theorem}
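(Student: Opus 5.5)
This is Sjamaar's local convexity theorem, so I only sketch how I would derive it from the local normal form (Theorem~\ref{thm:lnf}) and properness. The argument has two halves: a local statement near the orbit $T\cdot p$, and a global statement controlling points far from $T\cdot p$.

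\emph{Local half.} By Theorem~\ref{thm:lnf}, there is a $T$-invariant neighborhood $U$ of $p$ and an equivariant symplectomorphism $\Psi\colon U\to Y$ onto a neighborhood of $[1,0,0]$. Since $M$ is connected and both maps are moment maps for the same action, $\phi|_U = \phi(p)+\phi_Y\circ\Psi$, after adjusting by the constant $\phi(p)$. The map $\phi_Y$ in \eqref{eq:phiY} is \emph{open onto its image} at $[1,0,0]$. Indeed, $\phi_Y(Y)=\fh^\circ+\sum\R_{\ge0}\eta_i$, and the image of a basic neighborhood $\{|\alpha|<\epsilon,\ |z|<\epsilon\}$ contains $(\fh^\circ+\sum\R_{\ge0}\eta_i)\cap B_\delta$ for some $\delta>0$. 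To see this, take any point $\beta+\sum c_i\eta_i$ with $c_i\ge0$ and $|\cdot|<\delta$. The map $\fh^\circ\times\R_{\ge0}^k\to\ft^*$ is linear, so I would use a conical argument: the cone $\sum\R_{\ge0}\eta_i$ is closed and finitely generated, and any small element of the cone can be written with small coefficients (Carathéodory plus closedness of the cone, with a constant depending only on the $\eta_i$). The conclusion is that $\phi(U)\supseteq \overline{C_p}\cap B_\delta(\phi(p))$ and $\phi(U)\subseteq\overline{C_p}$.

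\emph{Global half (the main obstacle).} We must rule out points $q\notin U$ with $\phi(q)$ arbitrarily close to $\phi(p)$ but outside $\overline{C_p}$. Here properness alone is not enough, and one needs connectivity of the fibers of $\phi$ near $\phi(p)$. My plan is to invoke the standard fact, proved by Sjamaar and by Lerman--Meinrenken--Tolman--Woodward via Morse--Bott theory of $\|\phi-\alpha\|^2$ together with local normal forms, that for a proper moment map on connected $M$ the fibers $\phi^{-1}(\alpha)$ are connected. Granting this, the key step is that $\phi^{-1}(\phi(p))\subseteq U$ after shrinking appropriately. This follows because the fiber is connected, while in the local model $\phi_Y^{-1}(0)$ near $[1,0,0]$ is a cone over the orbit and is closed in the fiber near $T\cdot p$. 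More carefully, I would work with every orbit in the fiber, cover the compact fiber by normal-form neighborhoods, and note that all points of the fiber have moment cones whose germs at $\phi(p)$ agree; the agreement follows from connectedness and local constancy of the germ $\phi(U_q)$ along the fiber. Call this common germ $\overline{C_p}$. Then, by properness, there is a neighborhood $V$ of $\phi(p)$ with $\phi^{-1}(V)$ contained in the union of these finitely many neighborhoods, since otherwise a sequence $q_n$ with $\phi(q_n)\to\phi(p)$ would subconverge to a point of the fiber. It follows that $\phi(M)\cap V=\overline{C_p}\cap V$.

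The hard step is establishing fiber connectivity, or equivalently the local constancy of the cone germ along $\phi^{-1}(\phi(p))$. I would try first the Morse--Bott argument, using the fact that the components of $\langle\phi,\zeta\rangle$ have even-index critical submanifolds, which follows from the local normal form. I would then pass to the proper version via an exhaustion argument.
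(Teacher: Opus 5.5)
The paper does not prove this statement; it quotes it as a special case of Sjamaar's Theorem 6.5 (see also LMTW). Your final strategy is a legitimate reduction of the statement to connectedness of the fibers of $\phi$. That strategy is: cover the compact fiber $F=\phi^{-1}(\phi(p))$ by finitely many normal-form charts $U_{q_1},\dots,U_{q_m}$, show every $\overline{C_{q_j}}$ equals $\overline{C_p}$, and use properness to get $\phi^{-1}(V)\subseteq\bigcup_j U_{q_j}$. You implicitly assume $M$ connected, which is necessary, since the statement fails for disconnected $M$. Your local half and the properness step are fine. But the proposal contains one false step, and it leaves the real local lemma unproved.

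\textbf{The false step.} You claim $\phi^{-1}(\phi(p))\subseteq U$ for a single normal-form neighborhood $U$. This fails whenever $\phi_Y^{-1}(0)$ is larger than the central orbit, i.e.\ unless the weights lie in an open half-space of $\fh^*$; in particular it fails at every tall point. Take $p_1=[0,1,0]$ in Example~\ref{examp:1}. The model is $\C^2$ with weights $1$ and $-1$, so $\phi_Y^{-1}(0)=\{|w_0|=|w_1|\}$ is a connected noncompact cone. If the compact fiber $\{|z_0|=|z_2|\}\subseteq\CP^2$ lay in $U$, its image under $\Psi$ would be a nonempty compact open subset of that cone, which is impossible. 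So $F\cap U$ is open but not closed in $F$, and this step should be dropped in favor of the covering argument.

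\textbf{The missing lemma.} The covering argument needs $q\mapsto\overline{C_q}$ to be locally constant on $F$. You call this ``equivalent'' to fiber connectivity, but it is an independent, purely local fact, and you need both. For $q'\in F\cap U_q$ you get for free only $\overline{C_{q'}}\subseteq\overline{C_q}$, because $\phi(U_q)\subseteq\overline{C_q}$. A one-sided inclusion cannot be propagated along a connected set. The reverse inclusion says that $\phi_Y$ is open onto its image at every point of $\phi_Y^{-1}(0)$, not just at $[1,0,0]$. Here is how to prove it:
\begin{itemize}
\item Write $\Psi(q')=[t,0,z']$ and $I'=\{i : z'_i\neq 0\}$.
\item Then $\sum_{i\in I'}|z'_i|^2\eta_i=0$ is a strictly positive relation, so $\sum_{i\in I'}\R_{\ge 0}\eta_i=\operatorname{span}\{\eta_i\}_{i\in I'}$.
\item Perturb $|z_i|^2$ about $|z'_i|^2$ for $i\in I'$, and add small $|z_i|^2$ for $i\notin I'$ and small $\alpha\in\fh^\circ$. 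With the same Carath\'eodory bound as in your local half, this covers a neighborhood of $0$ in $\fh^\circ+\sum_i\R_{\ge 0}\eta_i$.
\end{itemize}
With this, the set $\{q\in F:\overline{C_q}=\overline{C_p}\}$ is open and closed in $F$, and your argument goes through.

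\textbf{Fiber connectivity.} Your plan to prove this yourself would not work as stated. For merely proper $\phi$, the components $\langle\phi,\zeta\rangle$ need not be proper, so Atiyah's Morse--Bott induction does not apply directly. Nor does ``exhaustion'' produce compact Hamiltonian manifolds. LMTW obtain connectivity from a local--global principle, not from Morse theory of $\|\phi-\alpha\|^2$. Since you must cite that result anyway, note that the same principle also gives that $\phi\colon M\to\phi(M)$ is open. Your global half then follows at once from the local half: $\phi(U)$ is a neighborhood of $\phi(p)$ in $\phi(M)$ and is contained in $\overline{C_p}$.
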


 Before restricting our focus to complexity one Hamiltonian torus actions,
we recall that in the complexity zero case the orbital moment map is locally a homeomorphism onto its image. 
\begin{lemma}\label{le:eqdarboux}
 Let $(M,\omega,\phi)$ be a complexity zero Hamiltonian $T$-manifold.  Given $p \in M$,
there exists a $T$-invariant neighborhood $W$ of $p$  such that $\phi$ induces a homeomorphism
from $W/T$ to   an open neighborhood of $\phi(p)$ in the moment cone $\overline{C_p}$.
Moreover, an orbit in $W$ maps to the relative interior $C_p$
of $\overline{C_p}$ exactly if the orbit has trivial stabilizer.

\end{lemma}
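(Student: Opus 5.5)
The plan is to reduce the statement to the local model at $p$ using Theorem~\ref{thm:lnf}, and then handle the local model by hand. Since the action is complexity zero, $k = \dim H$, so the slice representation $\rho \colon H \hookrightarrow (S^1)^k$ is an injection between tori of the same dimension, at least on identity components. First I would check that $H$ is connected, or more precisely that $\rho$ is an isomorphism. For a general effective complexity zero action this holds: the stabilizer of a point in a symplectic toric manifold is connected, and its weights form a $\Z$-basis of the weight lattice of $H$. I would justify this by effectiveness. Generic points of $Y$ near $[1,0,0]$ have stabilizer $\ker\rho$, which is trivial. Counting dimensions, the image $\rho(H)$ is a closed subgroup of $(S^1)^k$ of dimension $k$, hence all of $(S^1)^k$. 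So $\rho$ is an isomorphism and we may identify $H \cong (S^1)^k$, acting on $\C^k$ coordinatewise.

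With this identification, $Y = T\times_H \fh^\circ \times \C^k \cong (T/H) \times \fh^\circ \times \C^k / \ldots$. More concretely, choose a complement $K$ to $H$ in $T$, so that $T = K\times H$; such a splitting exists because $H$ is a subtorus. Then $Y \cong K \times \fh^\circ \times \C^k$, with $H$ acting on $\C^k$ standardly. The orbit space is $Y/T \cong \fh^\circ \times \C^k/(S^1)^k \cong \fh^\circ \times \R_{\ge 0}^k$, via $(\alpha, z)\mapsto (\alpha, \tfrac12|z_1|^2,\dots,\tfrac12|z_k|^2)$. Composed with this, the map $\phi_Y$ becomes $(\alpha, s)\mapsto \alpha + \sum s_i\eta_i$. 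The $\eta_i$ form a basis of $\fh^*$ and $\ft^* = \fh^\circ\oplus\fh^*$, so this is a linear isomorphism onto $\fh^\circ + \sum \R_{\ge0}\eta_i = \overline{C_p}-\phi(p)$. The induced map $Y/T\to \overline{C_p} - \phi(p)$ is a continuous bijection. To get a homeomorphism, I would either note that $|z|^2$ induces a homeomorphism $\C/S^1\to\R_{\ge0}$, or observe that $\phi_Y$ is proper and hence the bijection is closed.

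Next I would transport this to $M$. Take the $T$-equivariant symplectomorphism $\Psi$ from Theorem~\ref{thm:lnf}, defined on an invariant neighborhood $U$ of $p$. Moment maps for a connected torus are unique up to constants, so $\phi = \phi_Y\circ\Psi + \phi(p)$ on $U$, after shrinking $U$ to be connected. Choose a small open ball $B$ around $\phi(p)$ such that $W := \Psi^{-1}(\phi_Y^{-1}(B - \phi(p)))$ lies in $U$. This is possible because $\phi_Y^{-1}$ of a small ball around $0$ is a small invariant neighborhood of the orbit $T\cdot[1,0,0]$, by properness of $\phi_Y$ on the $\C^k$-factor and the compactness of $K$. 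Then $W/T$ maps homeomorphically onto $B\cap \overline{C_p}$, which is open in $\overline{C_p}$.

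Finally, the stabilizer claim. In the model, the stabilizer of $[t,\alpha,z]$ is the subgroup of $H\cong(S^1)^k$ fixing $z$, namely the product of the circle factors with $z_i=0$. This is trivial exactly when all $z_i\ne0$, i.e. all $s_i>0$, i.e. the image lies in $C_p$. The main obstacle is the first step: showing $\rho$ is onto, so that $H$ is connected and the $\eta_i$ form a lattice basis. This is where effectiveness enters. Without it, $Y/T$ would be a quotient of a cone by a finite group and injectivity would be unclear.
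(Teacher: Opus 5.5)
Your proposal is correct and follows essentially the same route as the paper: reduce to the local model $Y$ via the local normal form, use the dimension count ($k=\dim H$ with $\rho$ injective) to get $H\cong (S^1)^k$ acting standardly with the $\eta_i$ a basis of $\fh^*$, identify $Y/T$ with $\fh^\circ\times\R^k_{\ge 0}$ so that $\phi_Y$ becomes the linear isomorphism $(\alpha,s)\mapsto \alpha+\sum s_i\eta_i$, and read off the stabilizer from which $z_i$ vanish. You also spell out details the paper leaves implicit, namely that $\phi=\phi_Y\circ\Psi+\phi(p)$ and that $W$ can be taken to be the preimage of a small ball around $\phi(p)$.
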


\begin{proof} 
Let  $p$ have  stabilizer $H$ and isotropy weights $\eta_1,\dots,\eta_h \in \fh^*$.
By the symplectic local normal form theorem, we may assume that $M$ is a local model
$Y = T \times_H \fh^\circ \times \C^{h}$ with  moment map $\phi_Y \colon Y \to \ft^*$, $[t,\alpha,z] \mapsto  \alpha + \sum \frac{1}{2} \eta_i |z_i|^2$. Moreover,  by a dimension count, the weights $\eta_1,\dots,\eta_h$ are a basis for $\fh^*$, and  $H$ acts on $\C^h$ through an
isomorphism with $(S^1)^h$. Hence, the quotient $Y/T$ is naturally
homeomorphic to $\fh^\circ \times \R_{\geq 0}^h$. The first
claim follows immediately from the fact that the map from $\fh^\circ \times \R^h$ to $\ft^*$ sending $(\alpha, x)$ to $\alpha + \sum_{i=1}^h x_i \eta_i$ is a linear isomorphism.
Finally,  $\phi_Y([t,\alpha, z])$ lies in the relative interior $C_p$ of $\overline{C_p}$ exactly if $z_i \neq 0$ for all $i$,
that is, exactly if $[t,\alpha,z]$ has trivial stabilizer.

\end{proof}

The next lemma, which gives a criterion for a point in a complexity one Hamiltonian $T$-manifold to be tall,
follows immediately from 
\cite[Lemmas 5.4 and 5.8]{Tall1}.

\begin{lemma}[Karshon-Tolman]\label{lem:KT-pos-combo}
Let $\tham$ be a complexity one Hamiltonian $T$-manifold. 
Let  $p \in M$ be a  point with  stabilizer $H$ and slice representation $\rho \colon H \to (S^1)^{h+1}$.
There
exists a (unique up to sign)  non-zero $\xi \in  \Z^{h+1}$ such that the following sequence is exact:
$$1 \to H \stackrel{\rho}{\to} (S^1)^{h+1} \stackrel{P}{\to} S^1 \to 1,$$
where $P(z) = \prod_{i=0}^h z_i^{\xi_i}.$
Moreover, we can choose $\xi$ so that $\xi_i \geq 0$ for all $i$ exactly if $p$ is tall.
\end{lemma}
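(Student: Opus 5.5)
The plan is to prove the two assertions separately: the exact sequence comes from Lie group structure theory, and the tallness criterion from a computation in the local model of Theorem~\ref{thm:lnf}.

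\emph{Exact sequence.} Since $\tham$ has complexity one, the slice representation at $p$ has complex dimension $k = 1 + \dim H$. Write $h := \dim H$, so $\rho \colon H \to (S^1)^{h+1}$. Because the $T$-action is effective, the local normal form shows that $H$ acts effectively on the slice, so $\rho$ is injective with closed image of dimension $h$. The quotient $(S^1)^{h+1}/\rho(H)$ is then a compact abelian Lie group of dimension one. It is also connected, being the image of a connected group, and so it is isomorphic to $S^1$. Fixing such an isomorphism yields a surjective homomorphism $P \colon (S^1)^{h+1} \to S^1$ with kernel $\rho(H)$. Every homomorphism $(S^1)^{h+1} \to S^1$ has the form $z \mapsto \prod z_i^{\xi_i}$ with $\xi \in \Z^{h+1}$, and $\xi \neq 0$ since $P$ is onto. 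The only automorphisms of $S^1$ are $z \mapsto z^{\pm 1}$, and they are the only freedom in choosing the isomorphism. Hence $\xi$ is unique up to sign.

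\emph{Tallness criterion.} Differentiating the exact sequence gives an exact sequence of Lie algebras $0 \to \fh \to \R^{h+1} \to \R \to 0$, where the last map is $x \mapsto \langle \xi, x\rangle$. Dualizing, the map $\R^{h+1} \to \fh^*$ sending $e_i$ to $\eta_i$ is surjective, and its kernel is $\R\xi$. Thus the only linear relations $\sum x_i \eta_i = 0$ are the multiples $x = t\xi$.

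By Theorem~\ref{thm:lnf}, a neighborhood of $[p]$ in $\phi^{-1}(\phi(p))/T$ is homeomorphic to a neighborhood of $[1,0,0]$ in $\phi_Y^{-1}(\phi(p))/T$. By \eqref{eq:phiY}, and translating so that $\phi(p)$ corresponds to $0$, this reduced space is
$$\Big\{ z \in \C^{h+1} : \textstyle\sum \tfrac12 |z_i|^2 \eta_i = 0 \Big\}/H.$$
Setting $x_i = \tfrac12 |z_i|^2 \geq 0$, a point $z$ lies in this set exactly if $x = t\xi$ for some $t \in \R$.
\begin{itemize}
\item If $\xi$ can be chosen with all $\xi_i \geq 0$, then for every $t > 0$ the set $\{|z_i|^2 = 2t\xi_i\}$ is non-empty. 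Its points converge to $0$ as $t \to 0$, and their images are distinct from $[0]$ in the quotient. So $[p]$ is not isolated, and $p$ is tall.
\item If instead $\xi$ has entries of both signs, then $x = t\xi$ with $x \geq 0$ forces $t = 0$, hence $z = 0$. So $[p]$ is isolated, and $p$ is short.
\end{itemize}

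The only genuinely delicate step is the identification of reduced spaces near $[p]$ through the local normal form. One has to ensure that the symplectomorphism intertwines the moment maps up to the constant $\phi(p)$, so that fibers correspond to fibers. This holds because the symplectomorphism is $T$-equivariant and moment maps are determined up to a constant on connected sets, which can be fixed by matching the values at $p$. The rest is routine linear algebra.
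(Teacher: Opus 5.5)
Your proof is correct. The paper gives no argument of its own for this lemma; it only says the lemma follows immediately from \cite[Lemmas 5.4 and 5.8]{Tall1}. So your proposal is a self-contained replacement for a citation, not a variant of an argument in the text.

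Your ingredients are the natural ones:
\begin{itemize}
\item the local normal form (Theorem~\ref{thm:lnf}), with moment maps matched up to the constant $\phi(p)$;
\item the structure of compact abelian Lie groups, which produces $P$ and fixes $\xi$ up to sign;
\item the identity $\ker\big(x\mapsto\sum x_i\eta_i\big)=\R\xi$, used to analyze the zero level of the $H$-moment map on $\C^{h+1}$.
\end{itemize}
The last identity is exactly the content of Remark~\ref{rem:xii}, which the paper deduces after the lemma. In your write-up it instead drives the short/tall dichotomy. The citation buys brevity. Your route makes the lemma checkable from the local normal form and elementary Lie theory alone.

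Two small points you could make explicit:
\begin{itemize}
\item Lying in $\phi_Y^{-1}(0)$ also forces the $\fh^\circ$-component $\alpha$ to vanish, by the splitting $\ft^*=\fh^\circ\oplus\fh^*$. This is why the local reduced space is $\{z:\sum\tfrac12|z_i|^2\eta_i=0\}/H$.
\item Deducing injectivity of $\rho$ from effectiveness of the $T$-action uses that $M$ is connected: an element acting trivially near $p$ then acts trivially on the component of $p$. The paper's Definition~\ref{def:sym-slice} already takes $\rho$ to be an inclusion, so this costs nothing here.
\end{itemize}
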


\begin{remark}
\label{rem:xii}
In the situation of Lemma~\ref{lem:KT-pos-combo}, let $\eta_0,\dots, \eta_h$ be the isotropy weights for $p$.
Differentiating the identity $P \circ \rho = 1$ yields $\sum_i \xi_i \eta_i = 0$.  Since the isotopy weights span the $h$-dimensional vector space $\fh^*$, this implies
that if $\sum_{i=0}^h x_i \eta_i = 0$, then $x=(x_0,\ldots,x_{h})$ is multiple of $\xi=(\xi_0,\ldots,\xi_h)$.
In particular, if $x \neq 0$  then  $x_i x_j \geq 0$ for all $i, j$  exactly if $p$ is tall.

    \end{remark}

Using this criterion,  the moment cone at a tall point can be presented as a union of pairwise disjoint cones associated to linearly independent subsets of the set of isotropy weights at $p$.

\begin{lemma}\label{cor:linear}
Let $\tham$ be a complexity one Hamiltonian $T$-manifold.
Consider a tall point $p \in M$ with  stabilizer $H$,  isotropy weights $\eta_0,\dots,\eta_h \in \fh^*$,  and moment cone $\overline{C_p}$.
Let $\mathcal I$ be the set of $I \subseteq \{0,\dots,h\} $ such that $\{ \eta_i\}_{i \in I}$ is linearly independent. Given $I \in \mathcal I$, let $C_I 
:=\phi(p) + \mathfrak h^\circ +  \sum_{i \in I}  \R_{>0 } \,  \eta_i$.
Then
$$ \overline{C_{p}} = \bigcup_{I \in \mathcal I} C_I \quad \text{and} \quad C_I \cap C_J = \emptyset \text{ if } I \neq J \in \mathcal I.$$

\end{lemma}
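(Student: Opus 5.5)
The statement is purely about linear algebra in $\ft^* = \fh^\circ \oplus \fh^*$, so I will translate everything to $\fh^*$. Subtracting $\phi(p)$ and projecting along $\fh^\circ$ reduces the claim to a statement about the cone $K := \sum_{i=0}^h \R_{\geq 0}\eta_i \subseteq \fh^*$. Write $K_I := \sum_{i\in I}\R_{>0}\eta_i$. I must show that $K = \bigcup_{I\in\mathcal I} K_I$, and that $K_I \cap K_J = \emptyset$ for $I \neq J$. The only input is Remark~\ref{rem:xii}: the linear relations among $\eta_0,\dots,\eta_h$ are exactly the multiples of a single $\xi \neq 0$. Since $p$ is tall, we may take all $\xi_i \geq 0$.

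\emph{Covering.} The inclusion $\bigcup_I K_I \subseteq K$ is clear. Conversely, take $v = \sum_i x_i\eta_i$ with all $x_i \geq 0$, and choose such an expression with minimal support $S = \{i : x_i > 0\}$. If $\{\eta_i\}_{i\in S}$ is linearly independent, then $v \in K_S$ and we are done; note this includes $S=\emptyset$, $v=0$, where $K_\emptyset=\{0\}$. Otherwise some nonzero relation is supported in $S$. By Remark~\ref{rem:xii} that relation is a multiple of $\xi$, so $\operatorname{supp}\xi \subseteq S$ and $\xi$ has nonnegative entries. Replace $x$ by $x - t\xi$ with $t = \min_{i\in \operatorname{supp}\xi} x_i/\xi_i > 0$. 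This keeps all coefficients $\geq 0$, still represents $v$, and kills at least one coordinate in $S$. That contradicts minimality, so this case cannot occur.

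\emph{Disjointness.} Suppose $v \in K_I \cap K_J$ with $I \neq J$ both in $\mathcal I$. Write $v = \sum_{i\in I} a_i\eta_i = \sum_{j\in J} b_j\eta_j$ with all $a_i, b_j > 0$, and set $x := a - b \in \R^{h+1}$, extending by zero. Then $\sum x_i\eta_i = 0$. Also $x \neq 0$: since $I \neq J$, some index lies in exactly one of the two sets, and there $x_i$ is $a_i>0$ or $-b_i<0$. By Remark~\ref{rem:xii}, $x$ is a nonzero multiple of $\xi$, so all its nonzero entries share a sign; say $x_i \geq 0$ for all $i$, the other case being symmetric. Then $x_j = -b_j < 0$ is impossible for $j \in J\setminus I$, so $J \subseteq I$, and therefore $I \setminus J \neq \emptyset$. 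Hence $\operatorname{supp} x \supseteq I\setminus J$ is nonempty and contained in $I$, so $\operatorname{supp}\xi \subseteq I$. Then $\xi$ is a nontrivial linear relation among $\{\eta_i\}_{i\in I}$, contradicting the independence of $I$.

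\emph{Lifting back.} Both statements transfer to $\ft^*$ because $\phi(p) + \fh^\circ + (\cdot)$ preserves unions, and preserves disjointness thanks to the direct sum decomposition. The only delicate point is the sign bookkeeping in the disjointness step, where tallness is used essentially; otherwise the argument is routine.
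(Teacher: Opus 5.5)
Your proof is correct and follows essentially the paper's argument: a Carath\'eodory-type reduction (subtracting a multiple of a linear relation to shrink the support) for the covering, and for disjointness, the difference of two positive representations combined with Remark~\ref{rem:xii} and linear independence. The only difference is minor. In the covering step you use tallness to take the relation $\xi$ with all $\xi_i \geq 0$, whereas the paper's Carath\'eodory argument works with an arbitrary nonzero relation and does not need tallness there. In the disjointness step you apply the sign condition first and linear independence second, which is the reverse of the paper's order.
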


\begin{proof}
First, we will show that 
$\overline{C_p} =\cup_{I \in \mathcal I} C_I$, which is a variant of Cartheardory's theorem for cones.
If not, then there exists $\alpha \in \sum_{i=0}^h \R_{\geq 0} \eta_i$ such
that $\alpha \not\in \sum_{i \in I} \R_{\geq 0} \eta_i$ for any $I \in \mathcal I$.
Fix  $I \subseteq \{0,\dots,h\}$ such that 
$\alpha = \sum_{i \in I} x_i \eta_i$ for some   $x \in \R^{I}_{\geq 0}$
but
$\alpha \not\in \sum_{j \in J} \R_{\geq 0} \eta_j$ for any $J \subsetneq I$.
By assumption $I \not\in \mathcal I$ and so there  exists non-zero $y \in \R^I$ such that $\sum_{i \in I} y_i \eta_i = 0$; we may further assume that $y_i < 0$ for some $i \in I$.  
Then 
 \[
  \alpha = \sum_{i \in I} x_i\eta_i + t \left( \sum_{i \in I}  y_i \eta_i \right) = \sum_{i \in I}(x_i+t  y_i)\eta_i
 \]
 for all $t \in \R$.
 Let $t_0$ be the smallest non-negative number such that $x_{i_0}+t_0 y_{i_0}=0$ for some $i_0$; such a $t_0$ must exist since all $x_i$ are non-negative and at least one of the $y_i$ is negative.
 Then $\alpha$ is in the non-negative span of $\{\eta_i\}_{i \in I \smallsetminus \{i_0\}}$,
 which is impossible.

Next, we will show that $C_I \cap C_{J} = \emptyset$ for all $I \neq J \in \mathcal I$.
If not, then there exist   $I \neq  J \in \mathcal I$
and positive numbers $x_i$ for $i \in I$ and $y_j$ for $j \in J$
such that \begin{equation} \label{eqdiff}
\sum_{i \in I} x_i \eta_i - \sum_{j \in J} y_j \eta_j = 0.
\end{equation}
Since  $\{\eta_i\}_{i \in I}$ and $\{\eta_j\}_{j \in J}$ are linearly independent, the fact that $I \neq J$ implies that
$I \not\subseteq J$ and $J \not\subseteq I$.
Hence, \eqref{eqdiff} has both positive and negative  coefficients, which  contradicts 
Remark~\ref{rem:xii}.
\end{proof}

By the Convexity theorem of Atiyah \cite{Atiyah} and Guillemin-Sternberg \cite{GSconvex}, the {\bf moment image} $\phi(M)$  of a compact, connected Hamiltonian $T$-manifold $\tham$ is a convex polytope.
In the complexity one case the moment image of $M_\tall$ is also convex.
\begin{lemma}[\cite{Tall4}, Lemma 2.9]
\label{deltatall}
	Let $\tham$ be a compact, connected  complexity one Hamiltonian $T$-manifold. Then $\phi(M_\tall) \subseteq \ft^*$
    is convex.
\end{lemma}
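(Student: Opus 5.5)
The plan is to reduce convexity of $\phi(M_\tall)$ to a local statement and then pass to a global one. I expect to show that
$$\phi(M_\tall) = \phi(M) \smallsetminus F,$$
where $F$ is either empty or a single closed face of the polytope $\phi(M)$. This suffices: if $F = \phi(M) \cap \{\langle \cdot, \zeta\rangle = c\}$ with $\langle \cdot,\zeta\rangle \geq c$ on $\phi(M)$, then $\phi(M)\smallsetminus F = \phi(M) \cap \{\langle\cdot,\zeta\rangle > c\}$, which is an intersection of convex sets. Here the convexity of $\phi(M)$ itself comes from the Atiyah--Guillemin--Sternberg theorem.

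First I will identify the set $S := \phi(M)\smallsetminus \phi(M_\tall)$ of short values. Since fibers of $\phi$ are connected and reduced spaces in complexity one are connected, a value $\alpha$ lies in $S$ exactly when $\phi^{-1}(\alpha)$ is a single orbit $T\cdot p$ with $p$ short.

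Next comes the local analysis at a short point $p$. By Lemma~\ref{lem:KT-pos-combo}, the vector $\xi$ has entries of both signs. I will use the symplectic local normal form (Theorem~\ref{thm:lnf}) together with Theorem~\ref{le:imagecone} to work in the local model $Y$. There I will compute, for $\beta \in \overline{C_p}$ near $\phi(p)$, which $\beta$ have fiber $\phi_Y^{-1}(\beta)/T$ equal to a point. Writing $\beta - \phi(p) = \alpha + \sum x_i \eta_i$ with $x_i \ge 0$, the solution set in $x$ is an affine line segment in the direction $\xi$ intersected with $\R^{h+1}_{\ge 0}$. Because $\xi$ has mixed signs, this segment is bounded. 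The fiber is a point exactly when the segment degenerates to a point, and this happens exactly when $\beta$ lies in the minimal face of $\overline{C_p}$ containing $\phi(p)$, namely $\phi(p) + \fh^\circ$. I expect this because the segment can only collapse when all coordinates $x_i$ with $\xi_i \neq 0$ vanish.

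Consequently, near any short value, $S$ coincides with a face of the local cone. Using Theorem~\ref{le:imagecone}, that face is a face of $\phi(M)$ through $\phi(p)$. Hence $S$ is relatively open in a union of faces of $\phi(M)$. It is also closed, since a limit of one-orbit fibers stays a one-orbit fiber by properness and the upper semicontinuity of the fiber dimension.

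The main obstacle is the global step: showing that $S$ is contained in a single face rather than spread over several disjoint faces. My plan here is to use the Duistermaat--Heckman function. On $\phi(M)$ it is given by the symplectic area of the reduced space, which is positive exactly on tall values in the interior and extends continuously to the boundary. In complexity one this function is log-concave (Graham/Okounkov/Cho--Kim). A continuous log-concave function has convex positivity set, and this forces the zero set on the boundary to be the complement of a convex set. Combining this with the local face structure above identifies $S$ with a single face. I would first try to make the boundary continuity of the DH density rigorous using the local models. Only if that fails would I fall back on a Tietze--Nakajima local-convexity argument for the connected set $\phi(M_\tall)$.
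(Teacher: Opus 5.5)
\textbf{There is a genuine gap: the claim your plan rests on is false.} The set $S$ of short values need not lie in a single face of $\phi(M)$. Example~\ref{examp:1} of this paper already shows this. There $\phi(M)=[-\frac12,\frac12]$ and the only short points are $p_0$ and $p_2$, so $S=\{\pm\frac12\}$ consists of two distinct vertices. So the step you call the main obstacle cannot be carried out, and no log-concavity argument will ``identify $S$ with a single face.''

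Your local computation is also wrong when $h\geq 2$. The segment $(x+\R\xi)\cap\R^{h+1}_{\geq 0}$ degenerates as soon as $x_i=x_j=0$ for a \emph{single} pair with $\xi_i<0<\xi_j$; the other coordinates with $\xi_k\neq 0$ need not vanish. For example, let $T^2$ act on $\CP^3$ by $(\lambda_1,\lambda_2)\cdot[z]=[z_0:\lambda_1 z_1:\lambda_2 z_2:\lambda_1\lambda_2 z_3]$. At $p=[1:0:0:0]$ the weights are $(1,0),(0,1),(1,1)$ and $\xi=(1,1,-1)$. The value $\phi(p)+\eta_0$ has the unique representative $x=(1,0,0)$, so both edges of $\overline{C_p}$ consist of short values, not just the apex $\phi(p)+\fh^\circ=\{\phi(p)\}$. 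In fact the whole boundary of the square $\phi(\CP^3)$ is short.

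The correct local statement is this. Near $\phi(p)$, the short values form the union, over all pairs with $\xi_i<0<\xi_j$, of the facets $\phi(p)+\fh^\circ+\sum_{k\neq i,j}\R_{\geq 0}\eta_k$ of $\overline{C_p}$. Each such facet is cut out by a $\nu\in\fh$ vanishing on the $\eta_k$ with $k\neq i,j$. Since $\xi_i\langle\eta_i,\nu\rangle+\xi_j\langle\eta_j,\nu\rangle=0$, you may choose $\nu$ positive on both $\eta_i$ and $\eta_j$. Then $\langle\cdot,\nu\rangle$ attains its minimum on $\overline{C_p}$ exactly on that facet.

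\textbf{How to repair it.} You never need a single face. If $S$ is \emph{any} union of faces of $\Delta=\phi(M)$, then $\Delta\smallsetminus S$ is convex, because a face containing an interior point of a segment in $\Delta$ contains the whole segment. With the corrected local picture you can also argue directly:
\begin{enumerate}
\item Suppose $a,b\in\phi(M_\tall)$ and some $c=\phi(p)$ in the open segment $(a,b)$ is short.
\item By Theorem~\ref{le:imagecone}, the segment lies in $\overline{C_p}$ near $c$.
\item Since $\langle\cdot,\nu\rangle$ is minimized at the interior point $c$, it is constant on the segment near $c$. So the segment near $c$ lies in a short facet, and $S\cap(a,b)$ is open in $(a,b)$.
\item $S$ is closed because $S=\phi(M\smallsetminus M_\tall)$ and $M_\tall$ is open. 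This, not upper semicontinuity of fiber dimension, is the right reason: upper semicontinuity lets fibers jump \emph{up} in the limit, so it proves nothing here.
\item By connectedness $(a,b)\subseteq S$, and since $S$ is closed, $a\in S$, a contradiction.
\end{enumerate}
This uses only the local normal form, Theorem~\ref{le:imagecone}, and connectedness of reduced spaces. The paper itself gives no proof; it quotes \cite[Lemma 2.9]{Tall4}.

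Your Duistermaat--Heckman idea could stand on its own without any face analysis. A log-concave function has convex positivity set, so a density that is continuous on $\Delta$, log-concave, and zero exactly at short values already shows $\phi(M_\tall)$ is convex. But establishing those boundary properties is real work, and that route still cannot yield the single-face description you aimed for.
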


The next lemma will be used together with Lemma~\ref{closure} below to help understand orbit-closures for actions of complex tori.

\begin{lemma} \label{lem:nu}
Let $\tham$ be a complexity one Hamiltonian $T$-manifold.
Consider a point $p \in M$ with stabilizer  $H \subseteq T$ and isotropy  weights 
$\eta_0,\dots, \eta_h \in \fh^*$.
Given $I \subseteq \{0,\dots,h\}$, there  exists $\nu \in \fh$ such that $\langle \eta_i, \nu \rangle > 0$ for all $i \in I$ exactly if either $p$ is short or   $\{\eta_i\}_{i \in I}$ is linearly independent.

\end{lemma}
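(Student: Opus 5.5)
The statement is a linear-algebra fact about the isotropy weights $\eta_0,\dots,\eta_h$, which span the $h$-dimensional space $\fh^*$. The plan is to reduce it, via the Gordan/Farkas alternative, to the sign pattern of the unique linear relation from Remark~\ref{rem:xii}. By Gordan's theorem, there exists $\nu \in \fh$ with $\langle \eta_i,\nu\rangle>0$ for all $i\in I$ exactly if there is no nonzero $x\in\R^I_{\geq 0}$ with $\sum_{i\in I}x_i\eta_i=0$. The direction "no such $x$ implies $\nu$ exists" is a separating-hyperplane argument: if $0$ is not in the convex hull of $\{\eta_i\}_{i\in I}$, separate $0$ from this compact convex set, and identify $\fh^{**}$ with $\fh$. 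The other direction is immediate: pairing a relation $\sum x_i\eta_i=0$ with $\nu$ gives $0=\sum x_i\langle\eta_i,\nu\rangle>0$, a contradiction.

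It therefore remains to show that a nonzero $x\in\R^I_{\geq0}$ with $\sum_{i\in I}x_i\eta_i=0$ exists exactly if $p$ is tall and $\{\eta_i\}_{i\in I}$ is linearly dependent. Extend such an $x$ by zero to $\{0,\dots,h\}$. By Remark~\ref{rem:xii}, $x$ is a nonzero multiple of $\xi$.

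Suppose first that $p$ is short. Then $\xi$ has entries of both signs, by Lemma~\ref{lem:KT-pos-combo}, and so does any nonzero multiple of it. This contradicts $x\geq 0$, so no such $x$ exists and $\nu$ exists.

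Suppose next that $p$ is tall. If $\{\eta_i\}_{i\in I}$ is linearly independent, then the only relation supported on $I$ is $x=0$, so again $\nu$ exists. If $\{\eta_i\}_{i\in I}$ is dependent, there is a nonzero relation $y$ supported on $I$. By Remark~\ref{rem:xii}, $y$ is a nonzero multiple of $\xi$, and since $p$ is tall all its coordinates have a common sign (Remark~\ref{rem:xii}, the $x_ix_j\geq0$ statement). After possibly replacing $y$ by $-y$, we get $y\geq0$, which is the forbidden nonnegative relation, so $\nu$ does not exist.

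The only step needing care is the Gordan alternative, and that is standard. I expect no real obstacle beyond checking that $\xi$ being defined only up to sign does not matter, and it does not, because both conditions used above are invariant under $x\mapsto -x$.
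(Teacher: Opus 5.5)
Your proof is correct and follows essentially the same route as the paper. Both reduce, via Remark~\ref{rem:xii}, to the fact that a non-zero non-negative relation $\sum_{i\in I}x_i\eta_i=0$ exists exactly if $p$ is tall and $\{\eta_i\}_{i\in I}$ is linearly dependent, and then apply a theorem of the alternative. The only difference is in that last step: you get $\nu$ by strictly separating $0$ from the convex hull of $\{\eta_i\}_{i\in I}$ (Gordan), whereas the paper shows the cone $\sum_{i\in I}\R_{\geq 0}\eta_i$ is pointed, so its dual cone is full-dimensional and any interior point serves as $\nu$; your version is slightly more direct.
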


\begin{proof}
By Remark~\ref{rem:xii}, there exists
a non-zero $x \in \R^I_{\geq 0}$ such that $\sum_{i \in I} x_i \eta_i = 0$ exactly if  $p$ is tall and $\{\eta_i\}_{i \in I}$
is linearly dependent.  
Hence, it is enough to prove that  there exists $\nu \in \fh$ such that $\langle \eta_i, \nu \rangle > 0$ exactly if 
$\sum_{i \in I} x_i \eta_i \neq 0$ for all non-zero  $x \in \R^I_{\geq 0}$.

Assume first that 
there exists $\nu \in \fh$ such that $\langle \eta_i,\nu \rangle > 0$ for all $i \in I$.
If $x \in \R_{\geq 0}^I$ is non-zero then $\sum_{i \in I} x_i \eta_i \neq 0$ because
$$ \bigg\langle  \sum_{i \in I} x_i \eta_i ,\nu \bigg\rangle = \sum_{i \in I} x_i \langle \eta_i,  \nu \rangle > 0.$$

Conversely, assume that $\sum_{i \in I} x_i \eta_i \neq 0$ for all non-zero $x \in \R^I_{\geq 0}$.
Then $\eta_i \neq 0$ for all $i$ and the closed cone
$$\overline{C} := \sum_{i \in I} \R_{\geq 0} \eta_i$$
is pointed, that is,  $\overline{C} \cap - \overline{C} = \{0\}$.
Consider the dual cone to $\overline{C}$ in ${{\fh}^{\,**}}=\fh$: $$ \overline{C}^{\, *} =
 \{\mu \in \fh \,|\, \langle \alpha, \mu \rangle \geq 0 \text{ for all }\alpha \in \overline{C}\}.$$
By \cite[Corollary 11.7.1]{Rock}, which is a corollary of the hyperplane separation theorem, the fact that  $\overline{C}$ is a non-empty closed convex cone implies that
$\overline{C} = \overline{C}^{\, **},$ the dual cone to $\overline{C}^*.$ 
Therefore, $\overline{C}^{**}$ is  pointed, and so $\overline{C}^*$ is not contained in a hyperplane through $0$, that is, $\overline{C}^*$ is full-dimensional. 
Hence, by \cite[Theorem 6.2]{Rock},  there exists some $\nu$ in the interior
of $\overline{C}^{\,*}$.
It is straightforward to check that this implies  $\langle \alpha,\nu \rangle > 0$ for any
nonzero $\alpha \in \overline{C}$; in particular,
$\langle \eta_i,\nu \rangle > 0$ for all $i \in I$. 
 \end{proof}

Finally,  the following criterion for a point to be exceptional 
is an immediate consequence of the symplectic local normal form and \cite[Lemma 4.4]{Tall1}.

\begin{lemma} [Karshon-Tolman]\label{fact:exp}
Let $\tham$ be a complexity one Hamiltonian $T$-manifold. A  point $p \in M$ with stabilizer $H$ 
is exceptional exactly if 
the subspace of the symplectic slice fixed by the $H$-action is trivial.
\end{lemma}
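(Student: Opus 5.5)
The plan is to reduce to the local model via the symplectic local normal form (Theorem~\ref{thm:lnf}) and then read both conditions off directly in linear coordinates.

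\emph{Reduction.} Let $p$ have stabilizer $H$ and slice representation $\rho=(\rho_0,\dots,\rho_h)\colon H\hookrightarrow (S^1)^{h+1}$, with isotropy weights $\eta_0,\dots,\eta_h$. By Theorem~\ref{thm:lnf} I replace a $T$-invariant neighborhood of $p$ by a neighborhood of $[1,0,0]$ in $Y=T\times_H \fh^\circ\times\C^{h+1}$, with moment map $\phi_Y$ as in \eqref{eq:phiY}. Both exceptionality and the slice representation are local and equivariant-symplectic invariants, so this loses nothing. In this model the symplectic slice at $[1,0,0]$ is $\C^{h+1}$ with $H$ acting by $\rho$. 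Its $H$-fixed subspace is therefore $V^H=\bigoplus_{i\,:\,\rho_i\equiv 1}\C e_i$; note that $\eta_i=0$ for each such $i$.

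\emph{Describe the reduced space near $[p]$.} Since $\fh^\circ\cap\fh^*=0$, a point $[t,\alpha,z]$ lies in $\phi_Y^{-1}(\phi_Y(p))=\phi_Y^{-1}(0)$ exactly if $\alpha=0$ and $\sum_i\tfrac12\eta_i|z_i|^2=0$. Hence near $[p]$ the reduced space is $Z/H$, where
\[
Z=\Big\{z\in\C^{h+1} : \textstyle\sum_i \eta_i|z_i|^2=0\Big\},
\]
and $[p]$ corresponds to $z=0$. Moreover, the $T$-stabilizer of $[t,0,z]$ equals the $H$-stabilizer of $z$, since $T$ is abelian and $s\cdot[t,0,z]=[t,0,z]$ forces $s\in H$ and $\rho(s)z=z$.

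\emph{First direction.} Suppose $V^H\neq 0$. Every $z\in V^H$ lies in $Z$, because the coordinates appearing in $V^H$ have zero weight. Such $z$ are $H$-fixed and distinct $z$ give distinct $H$-orbits, so arbitrarily close to $[p]$ there are points $[z]\neq[p]$ of the reduced space with full stabilizer $H$. Thus $p$ is not exceptional.

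\emph{Second direction.} Suppose $V^H=0$. Every point of the reduced space near $[p]$ other than $[p]$ itself is represented by some $z\in Z$ with $z\neq 0$. Since $z\notin V^H$, its stabilizer is a proper subgroup of $H$. Therefore $p$ is exceptional.

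The only step needing care is the bookkeeping above: that $\alpha$ must vanish in the fiber, and that the $T$-stabilizer of $[t,0,z]$ is exactly $\operatorname{Stab}_H(z)$. I expect neither to be a real obstacle. One point worth flagging is that "fixed by $H$" means $\rho_i\equiv 1$ on all of $H$, not merely $\eta_i=0$. For disconnected $H$ these differ, and the argument correctly uses the former.
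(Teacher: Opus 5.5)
Your proof is correct and follows the same route as the paper, which obtains the lemma from the symplectic local normal form (Theorem~\ref{thm:lnf}) together with \cite[Lemma 4.4]{Tall1}. The only difference is that you carry out the local-model computation yourself, identifying the reduced space near $[p]$ with $Z/H$ and the stabilizer of $[t,0,z]$ with $\operatorname{Stab}_H(z)$, instead of citing Karshon--Tolman; that computation never actually uses the complexity one hypothesis.
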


\section{Holomorphic  and K\"ahler torus actions} \label{ss:holomorphic}

In this section, we introduce the facts about K\"ahler torus actions (of arbitrary complexity) that we will need in our argument.
We begin with a brief review of holomorphic torus actions.

Recall that, for any Lie subgroup $G \subseteq U(n)$ with Lie algebra $\mathfrak g \subseteq  \mathfrak u(n)$, the complexification of $G$ is (naturally isomorphic to) the subgroup
$$G^\C  := \{\exp( i \zeta) u \mid u \in G, \zeta \in \mathfrak g \} \subseteq GL(n,\C), $$
which obviously contains $G$. In particular, 
 the complexification of the compact torus $(S^1)^d$ is $(\C^\times)^d.$

Assume that our compact torus  $T$ acts on a  complex manifold $M$, preserving the complex structure. If $M$ is compact,
then there exists a unique holomorphic action of $T^\C$ on $M$ extending the $T$-action; see, e.g., \cite[Section 2]{IshidaKarshon}.
In any case, there is at most one such action.
Note that by ``holomorphic action", we include the assumption that the map $T^\C \times M \to M$ is holomorphic.

Let $T^\C$ act holomorphically on a complex manifold $(M,J)$.
Given $p \in M$, let $(T^\C)_p$ be the stabilizer of $p$ in $T^\C$
and let $\mathcal O = T^\C \cdot p$ be the $T^\C$-orbit through $p$.
Then $T_p \mathcal O$ is preserved by $J$ and so the {\bf holomorphic slice} $T_p M/T_p \mathcal O$
inherits a natural complex structure  and a holomorphic $(T^\C)_p$-action,
called the {\bf (holomorphic) slice representation}.

The next  lemma helps determine
which orbits in a holomorphic slice representation contain the origin in their closure.

\begin{lemma}\label{closure} 
Fix a compact abelian group $H$ and  $I \subseteq \{1,\dots,n\}$. Let  $H^\C$ act on the $i$'th coordinate of $\C^I$  with weight $\eta_i \in \fh^*$ for each $i \in I$.
If  $z \in (\C^\times)^I$
then $0 \in \overline{ H^\C \cdot z}$ exactly if there exists $\nu \in \mathfrak h$ such that 
$\langle \eta_i, \nu \rangle > 0$ for all $i \in I$.

\end{lemma}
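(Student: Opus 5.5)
We prove the two implications separately, working in the torus $H^\C$, which we write as $H^\C = H \cdot \exp(i\fh)$ (the identity component suffices for the "if" direction; for the "only if" direction we reduce to it). Since $H$ acts by unitary diagonal matrices, $|h \cdot w_i| = |w_i|$ for $h \in H$. Hence only the factor $\exp(i\fh)$ affects the moduli of the coordinates. For $\zeta \in \fh$ we have
$$|(\exp(i\zeta)\cdot z)_i| = e^{-\langle \eta_i,\zeta\rangle}|z_i|,$$
up to a sign convention that we fix once and for all, replacing $\nu$ by $-\nu$ if necessary.

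For the "if" direction, assume $\nu \in \fh$ satisfies $\langle\eta_i,\nu\rangle>0$ for all $i\in I$. Take $\zeta = t\nu$ and let $t\to\infty$. Each coordinate then has modulus $e^{-t\langle\eta_i,\nu\rangle}|z_i|\to 0$. So $\exp(it\nu)\cdot z\to 0$, and therefore $0\in\overline{H^\C\cdot z}$.

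For the "only if" direction, assume $0\in\overline{H^\C\cdot z}$. Then there are $g_n = h_n\exp(i\zeta_n)$ with $g_n\cdot z\to 0$, which means $\langle\eta_i,\zeta_n\rangle + \log(1/|z_i|)\to+\infty$ for every $i\in I$. Since the $z_i$ are fixed and nonzero, this is equivalent to $\langle\eta_i,\zeta_n\rangle\to+\infty$ for all $i\in I$. We now argue by contradiction. Suppose no $\nu$ has $\langle \eta_i,\nu\rangle>0$ for all $i\in I$. This says the open convex cone $\{\nu : \langle\eta_i,\nu\rangle>0\ \forall i\}$ is empty. By a theorem of the alternative (Gordan's theorem, equivalently the separation argument of \cite{Rock} used in the proof of Lemma~\ref{lem:nu}), there is then a nonzero $x\in\R^I_{\geq0}$ with $\sum_i x_i\eta_i=0$. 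Pairing this relation with $\zeta_n$ gives
$$\sum_i x_i\langle\eta_i,\zeta_n\rangle = 0$$
for all $n$. But the left-hand side tends to $+\infty$, because every term with $x_i>0$ does and at least one $x_i$ is positive. This is a contradiction.

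The only delicate points are bookkeeping. First, the sign convention for the weights must be consistent with the definition of the complexified action. Second, if $H$ is disconnected, an element of $H^\C$ still factors as $h\exp(i\zeta)$ with $h$ in the compact group $H$, which follows from the description of $G^\C$ given above. The main step is Gordan's alternative. It can be derived directly from the pointed-cone and dual-cone argument already given in the proof of Lemma~\ref{lem:nu}: if no nonnegative nonzero combination of the $\eta_i$ vanishes, that proof produces the desired $\nu$.
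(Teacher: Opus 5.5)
Your proof is correct. The ``if'' direction is the same as the paper's, but the ``only if'' direction takes a different route.

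You pass to a sequence $g_n=h_n\exp(i\zeta_n)$ with $g_n\cdot z\to 0$, deduce $\langle\eta_i,\zeta_n\rangle\to+\infty$ for all $i\in I$, and reach a contradiction via Gordan's alternative, derived from the dual-cone argument in Lemma~\ref{lem:nu}. The paper argues directly. If no $\nu$ exists, then for each $w=\exp(i\zeta)h\cdot z$ in the $H_0^\C$-orbit, applying the hypothesis to $\zeta$ gives a coordinate with $|w_i|\geq|z_i|$, so the orbit stays a uniform distance from $0$. It then handles disconnected $H$ by writing $H^\C\cdot z$ as a finite union of translates $\gamma\cdot H_0^\C\cdot z$, using $H^\C/H_0^\C\simeq H/H_0$.

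Your appeal to convex duality is superfluous. Once you know $\langle\eta_i,\zeta_n\rangle\to+\infty$ for every $i$ in the finite set $I$, the vector $\nu=\zeta_n$ for any sufficiently large $n$ already satisfies $\langle\eta_i,\nu\rangle>0$ for all $i\in I$. This is just the contrapositive of the paper's estimate. So the paper's route avoids any separation theorem.

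Your treatment of disconnected $H$ is slightly cleaner than the paper's finite-union step. The polar decomposition $H^\C=H\exp(i\fh)$, together with the fact that $H$ acts unitarily, handles all components of $H^\C$ at once. Combining that decomposition with the direct estimate gives the shortest argument.
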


\begin{proof}
If 
there exists $\nu \in \mathfrak h$ such that $\langle \eta_i,\nu \rangle > 0$ for all $i$, then
$\lim_{t \to \infty} e^{- \sqrt{-1} t  \nu} \cdot z = 0$, and so $0 \in \overline{H^\C \cdot z}.$
Conversely, assume that
for any $\nu \in \mathfrak h$ 
 there exists $i$ such that $\langle \eta_i,\nu  \rangle \leq 0$.  Then for every $w \in H_0^\C \cdot z$ there will be at least one coordinate so that $|w_i| \geq |z_i|$, and so  $0 \not\in \overline{H_0^\C \cdot z}$.  Therefore,  $0 \not\in \overline{ \gamma \cdot H_0^\C \cdot z} = \overline{ H_0^\C \cdot \gamma z}$ for any $\gamma \in H^\C$.  Finally,  since $H$ is compact, $H^\C/H_0^\C \simeq H/H_0$ is finite and so  $0 \not\in \overline{H^\C \cdot z} = \cup_{\gamma \in H^\C/H_0^\C} \overline{ \gamma \cdot H_0^\C \cdot z}$.
\end{proof}

We now restrict our focus to the case that $T$ acts in a Hamiltonian fashion on a K\"ahler manifold, and the action extends to a holomorphic $T^{\C}$-action.
Our first goal is to show that in this case
the symplectic and holomorphic slice representations are isomorphic as complex representations of the $T$-stabilizer $T_p$ at a point $p$.  
To do this, we need the following special case of \cite[Proposition 1.6]{Sjamaar2}.

\begin{lemma}[Sjaamar] \label{fact:stab}
Let $(M,\omega,J,\phi)$ be a K\"ahler Hamiltonian $T$-manifold; assume that the $T$-action extends to a holomorphic $T^\C$-action. Given $p \in M$, the stabilizer $(T^\C)_p$ of $p$ in $T^\C$ is the complexification $(T_p)^\C$ of the stabilizer $T_p$ of $p$ in $T$.
\end{lemma}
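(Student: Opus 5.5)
The plan is to use the standard fact that, on a Kähler Hamiltonian $T$-manifold, the imaginary directions of $T^\C$ act by gradient flows of the components of the moment map. The inclusion $(T_p)^\C \subseteq (T^\C)_p$ is easy. The reverse inclusion will follow from a monotonicity argument along such a gradient flow line.

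First, I identify the infinitesimal action of $T^\C$. Since the action is holomorphic and extends the $T$-action, the vector field generated by $\sqrt{-1}\,\zeta$ is $J X_\zeta$ for $\zeta \in \ft$, up to a sign fixed by conventions. Write $g = \omega(\cdot, J\cdot)$ for the Kähler metric. The moment map equation $\iota_{X_\zeta}\omega = -\operatorname{d}\langle \phi,\zeta\rangle$ gives
$$\operatorname{d}\langle\phi,\zeta\rangle(v) = -\omega(X_\zeta,v) = \omega(v,X_\zeta) = g(v, -JX_\zeta),$$
so $\pm JX_\zeta$ is the gradient of $\langle \phi,\zeta\rangle$. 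Hence, for any $q \in M$, the function
$$s \mapsto \langle \phi(\exp(\sqrt{-1}\,s\zeta)\cdot q),\zeta\rangle$$
has derivative $\pm\|X_\zeta\|^2$ evaluated at $\exp(\sqrt{-1}\,s\zeta)\cdot q$. In particular it is monotone in $s$.

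For the inclusion $(T_p)^\C \subseteq (T^\C)_p$, take $\zeta \in \mathfrak t_p$. Then $X_\zeta(p) = 0$, so $JX_\zeta(p) = 0$. Therefore the one-parameter group $\exp(\sqrt{-1}\,s\zeta)$ fixes $p$, and $T_p$ fixes $p$ as well. By definition every element of $(T_p)^\C$ is of the form $\exp(\sqrt{-1}\,\zeta)u$ with $u \in T_p$ and $\zeta \in \mathfrak t_p$, so every such element fixes $p$.

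For the reverse inclusion, let $\gamma \in (T^\C)_p$ and write $\gamma = \exp(\sqrt{-1}\,\zeta)\,t$ with $t\in T$ and $\zeta \in \ft$. Set $q = t\cdot p$, so that $\exp(\sqrt{-1}\,\zeta)\cdot q = p$. Since $\phi$ is $T$-invariant, $\phi(q) = \phi(p)$. Thus the monotone function above takes the same value at $s=0$ and $s=1$, so its derivative $\pm\|X_\zeta\|^2$ vanishes along the path. In particular $X_\zeta(q) = 0$, hence also $JX_\zeta(q) = 0$, so $\exp(\sqrt{-1}\,s\zeta)$ fixes $q$. Taking $s=1$ gives $q = p$, and therefore $t \in T_p$. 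Moreover, $X_\zeta(p) = X_\zeta(q) = 0$ means that $\exp(s\zeta)$ fixes $p$ for all $s$, so $\zeta \in \mathfrak t_p$. Hence $\gamma = \exp(\sqrt{-1}\,\zeta)\,t \in (T_p)^\C$.

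The only delicate point is the first step: correctly identifying the generator of $\sqrt{-1}\,\zeta$ with $\pm JX_\zeta$ using holomorphicity of the $T^\C$-action, and checking that it is a gradient field. The sign is irrelevant, since the argument uses only monotonicity. Everything else is elementary.
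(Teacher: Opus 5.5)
Your proof is correct, and it takes a different route from the paper only in that the paper gives no argument: it quotes the statement as a special case of Sjamaar's Proposition~1.6 in \cite{Sjamaar2}. That result is formulated for arbitrary compact groups, where some hypothesis on $p$ is genuinely needed. For example, for $SU(2)$ acting on $\CP^1$, the stabilizer of a point in $SL(2,\C)$ is a Borel subgroup, not the complexification $\C^\times$ of the compact stabilizer.

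You instead prove the torus case directly. Holomorphicity identifies the generator of $\sqrt{-1}\,\zeta$ with $\pm JX_\zeta$, and compatibility of $J$ with $\omega$ makes this the gradient of $\langle\phi,\zeta\rangle$. Hence $s\mapsto\langle\phi(\exp(\sqrt{-1}\,s\zeta)\cdot q),\zeta\rangle$ is monotone. The $T$-invariance of $\phi$ then forces equal endpoint values, so $X_\zeta(q)=0$, which yields both $t\in T_p$ and $\zeta\in\ft_p$.

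Your argument shows that the abelian hypothesis enters only through the equality $\phi(t\cdot p)=\phi(p)$. For a nonabelian group one has only $\phi(t\cdot p)=\operatorname{Ad}^*_t\phi(p)$, which is why a hypothesis such as $\phi(p)=0$ is needed in general.

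The citation buys generality and brevity. Your route gives a short, self-contained argument that uses only the hypotheses of the lemma as stated (in particular, no compactness of $M$).
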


\begin{lemma} \label{fact:comp}
Let $(M,\omega,J,\phi)$ be a K\"ahler  Hamiltonian $T$-manifold; assume that the $T$-action extends to a holomorphic $T^\C$-action.
Fix $p \in M$.  As a complex representation, the action of $T_p$
 on the symplectic slice $T_p \mathfrak O^\omega/T_p \mathfrak O$ is  isomorphic
to the restriction  to $T_p$  of the $(T_p)^\C$-action  on the holomorphic slice $T_p M/T_p \mathcal O$.
Here,  $\mathfrak O = T \cdot p$ and $\mathcal O = T^\C \cdot p$ denote the  
 $T$-orbit and $T^\C$-orbit through $p$, respectively. \end{lemma}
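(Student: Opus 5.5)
We compare both slices with a third, $T_p$-invariant complex subspace of $T_pM$. Set $\mathfrak O = T\cdot p$, $\mathcal O = T^\C \cdot p$, and $H := T_p$.

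First we describe $T_p\mathcal O$. Since the $T^\C$-action is holomorphic, the vector field generated by $\sqrt{-1}\,\zeta$ is $JX_\zeta$ for each $\zeta\in\ft$. Hence
$$T_p\mathcal O = T_p\mathfrak O + J\,T_p\mathfrak O,$$
and this is a $J$-invariant, $H$-invariant subspace of $T_pM$.

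Next we describe the symplectic complement. Let $g=\omega(\cdot,J\cdot)$ be the compatible metric. Since $\omega(u,v) = g(Ju,v)$, a vector $v$ lies in $T_p\mathfrak O^\omega$ if and only if $v\perp_g J\,T_p\mathfrak O$. Define
$$W := (T_p\mathcal O)^{\perp_g} = (T_p\mathfrak O)^{\perp_g}\cap (J\,T_p\mathfrak O)^{\perp_g}.$$
The metric $g$ is $H$-invariant and $J$-invariant, and $T_p\mathcal O$ is $J$-invariant. Therefore $W$ is $J$-invariant and $H$-invariant, and $W\subseteq T_p\mathfrak O^\omega$. On $W$ the form $\omega$ is nondegenerate, because $\omega|_W(\cdot,J\cdot)=g|_W$ is positive definite.

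Now consider the two natural maps out of $W$. The inclusion $W\hookrightarrow T_pM$ composed with the projection gives a map $W\to T_pM/T_p\mathcal O$. It is an isomorphism since $W$ is the $g$-orthogonal complement of $T_p\mathcal O$. It is $H$-equivariant, and it is complex linear for $J$ on $W$ and the induced complex structure on the holomorphic slice. Similarly, $W\hookrightarrow T_p\mathfrak O^\omega$ composed with the projection gives a map $W\to T_p\mathfrak O^\omega/T_p\mathfrak O$, which is equivariant. It is injective because $W\cap T_p\mathfrak O=0$. For the dimension count, write $d=\dim T_p\mathfrak O$, and note that $T_p\mathfrak O\cap J\,T_p\mathfrak O=0$ since $T_p\mathfrak O$ is isotropic and $g$ is positive definite. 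Then $\dim T_p\mathcal O = 2d$, so $\dim W = \dim M - 2d$, while $\dim T_p\mathfrak O^\omega/T_p\mathfrak O = \dim M - 2d$ as well. So this map is also an isomorphism, and it is a linear symplectic isomorphism because the form on the symplectic slice is induced by $\omega$.

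It remains to match the complex structures. The complex structure $J|_W$ is $H$-invariant and compatible with $\omega|_W$. Transporting it to the symplectic slice gives an $H$-invariant compatible complex structure there. By Definition~\ref{def:sym-slice}, the complex representation of the symplectic slice does not depend on this choice up to isomorphism. Composing the two isomorphisms therefore gives an $H$-equivariant complex linear isomorphism from the symplectic slice to the holomorphic slice, with the $H$-action on the latter being the restriction of the $(T_p)^\C$-action; Lemma~\ref{fact:stab} identifies $(T_p)^\C$ with the full stabilizer $(T^\C)_p$. The only point needing care is the identity $T_p\mathcal O = T_p\mathfrak O + J\,T_p\mathfrak O$, which follows from the holomorphicity of the action; the rest is linear algebra.
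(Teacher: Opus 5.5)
Your proof is correct and follows essentially the same route as the paper. Your space $W=(T_p\mathcal O)^{\perp_g}$ is exactly the paper's intermediate space $T_p\mathfrak O^\omega\cap J(T_p\mathfrak O^\omega)$, because $T_p\mathfrak O^\omega=(J\,T_p\mathfrak O)^{\perp_g}$ and $J(T_p\mathfrak O^\omega)=(T_p\mathfrak O)^{\perp_g}$; both arguments map this $J$-invariant, $T_p$-invariant subspace isomorphically onto each slice using $T_p\mathcal O=T_p\mathfrak O+J\,T_p\mathfrak O$ and a dimension count. The only difference is cosmetic: your orthogonal-complement description makes the isomorphism onto the holomorphic slice immediate, whereas the paper proves both maps are injective and then compares dimensions.
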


\begin{proof}
Since
 $T_p \mathfrak O \subset  T_p \mathcal O$,  
    we have a commutative diagram \begin{equation} \label{diagramrestrict}
\begin{array}{c}
\begin{xymatrix}{
& T_p \mathfrak O^\omega \cap J(T_p \mathfrak O^\omega) \ar[dl]_{}
\ar[dr]^{} & \\
T_p \mathfrak O^\omega/T_p \mathfrak O \ar[rr]^{} & &T_pM/T_p \mathcal O}
\end{xymatrix}
\end{array}
\end{equation}

    Since  $J$ and $\omega$ are compatible, $ T_p \mathfrak O \cap J(T_p \mathfrak O^\omega)  = \{0\}$;   since, additionally, $T_p \mathfrak O + J(T_p \mathfrak O)=T_p \mathcal O$  and $T_p \mathcal O$ is a complex subspace,
    $$T_p \mathcal O \cap {T_p \mathfrak{O}}^\omega \cap J(T_p \mathfrak O^\omega) =  T_p \mathcal O \cap ( T_p \mathfrak O + J(T_p \mathfrak O))^\omega =  T_p \mathcal O \cap (T_p \mathcal O)^\omega = \{0\}.$$
     Thus, the left and right arrows in \eqref{diagramrestrict} are injective.

    Moreover,  each quotient space on the bottom row has (real) dimension $\dim {M}-2 \dim \mathfrak O$, while the subspace at the top is K\"ahler and has dimension at least $\dim M-2 \dim \mathfrak O$.   Therefore, the left arrow is a $T_p$-equivariant isomorphism of symplectic vector spaces,  and the right arrow is a $(T^\C)_p$-equivariant isomorphism of complex vector spaces.   Hence, there is a $T_p$-invariant compatible complex structure on the symplectic slice so that the bottom arrow is a  $T_p$-equivariant complex isomorphism.
 \end{proof}

Consequently, under the natural identification of the weight lattices of $T_p$ and $(T_p)^\C \simeq (T^\C)_p$,
the weights of the action of $T_p$ on the symplectic slice $T_p\mathfrak O^{\omega}/T_{p}\mathfrak O$
agree with the weights of the action of $(T^\C)_p$ on the holomorphic slice $T_p M/T_p \mathcal O$.  Hence, we will freely use the term ``slice representation" and ``isotropy weights" to refer to either the symplectic slice or the holomorphic slice.

The following theorem is a special case of \cite[Theorem 1.12]{Sjamaar2} (see also \cite[Theorem 1.21]{Sjamaar2}); our statement includes details from the proof of that theorem.

\begin{theorem}[Sjamaar]\label{reyer}
Let $(M,\omega,J,\phi)$ be a K\"ahler  Hamiltonian $T$-manifold; assume that the $T$-action extends to a holomorphic $T^\C$-action.
Fix $p \in M$ with $T$-stabilizer $T_p$.  
There exists a $T^\C$-invariant open neighborhood $U$ of the $T^\C$-orbit $\mathcal O$ of $p$ in $M$, a $(T_p)^{\C}$-invariant open neighborhood $S$ of the origin $0$ in the holomorphic slice $T_p M/T_p \mathcal O$, and a biholomorphic $T^\C$-equivariant map $T^\C \times_{(T_p)^{\C}} S \to U$ taking $[1,0,0]$ to $p$.
\end{theorem}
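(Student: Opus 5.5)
The plan is to build the holomorphic slice map by analogy with the symplectic local normal form, using the Kähler metric to produce a $T_p$-invariant complex slice transverse to the $T^\C$-orbit. First, by Lemma~\ref{fact:stab}, the stabilizer $(T^\C)_p$ equals $(T_p)^\C$, so $\mathcal O \cong T^\C/(T_p)^\C$ is a closed-orbit-type homogeneous space at least locally. The holomorphic slice $V := T_pM/T_p\mathcal O$ is a $(T_p)^\C$-representation, and by the proof of Lemma~\ref{fact:comp} it is identified $T_p$-equivariantly with the complex subspace $W := T_p\mathfrak O^\omega \cap J(T_p\mathfrak O^\omega) \subseteq T_pM$.

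Next, I will construct a $T_p$-equivariant holomorphic embedding $\sigma$ of a small $T_p$-invariant ball in $W$ into $M$, with $\sigma(0)=p$ and $d\sigma_0$ the inclusion. To do this, I take holomorphic coordinates at $p$, average over the compact group $T_p$ so that the chart is linear-equivariant (Bochner linearization for the compact group $T_p$ fixing $p$), and restrict it to $W$. Because the $T_p$-action extends holomorphically to $(T_p)^\C$ on both sides, $\sigma$ is also $(T_p)^\C$-equivariant wherever both sides are defined, by analytic continuation of the identity $\sigma(gz)=g\sigma(z)$ from $T_p$ to its complexification. This requires some care with domains.

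Then I define $\Psi \colon T^\C \times_{(T_p)^\C} S \to M$ by $[g,z]\mapsto g\cdot\sigma(z)$. This map is holomorphic and $T^\C$-equivariant, and its differential at $[1,0]$ is an isomorphism because $T_p\mathcal O \oplus W = T_pM$. By equivariance, $\Psi$ is therefore a local biholomorphism along the whole zero section $T^\C/(T_p)^\C$.

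The main obstacle is global injectivity of $\Psi$ near the noncompact orbit $\mathcal O$, and making $S$ genuinely $(T_p)^\C$-invariant rather than just a ball. Here the moment map and the Kähler structure are essential, exactly as in Sjamaar's proof. I would first replace the ball with a $(T_p)^\C$-saturated set of the form $\{z \in V : \overline{(T_p)^\C z} \ni 0\ \text{and the }\mu\text{-minimum of the orbit lies in the ball}\}$, where $\mu$ is the moment map of the linear $T_p$-action on $V$. I would then use the Kempf--Ness type uniqueness: each closed $(T^\C)$-orbit meets $\phi^{-1}$ of a value in a single $T$-orbit, and $g \mapsto \langle\phi(g\cdot x),\cdot\rangle$ is strictly convex along $\exp(i\mathfrak t)$ directions transverse to the stabilizer. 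With this, two points $\Psi[g,z]=\Psi[g',z']$ can be moved, by using the gradient flow of $|\phi-\phi(p)|^2$ / convexity, to points near $p$ in $\phi^{-1}$ near $\phi(p)$. At those points the symplectic local normal form (Theorem~\ref{thm:lnf}) gives injectivity modulo $T$. Finally, I set $U := \Psi(T^\C\times_{(T_p)^\C} S)$, which is open and $T^\C$-invariant. Alternatively, since the statement is cited as a special case of Sjamaar's theorem, I would simply verify that the hypotheses of \cite[Theorem 1.12]{Sjamaar2} are met and extract the above structure from its proof.
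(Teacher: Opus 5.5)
Your construction of $\sigma$ (Bochner linearization for $T_p$, then analytic continuation to $(T_p)^\C$, with the domain care you mention) is fine, and so is the conclusion that $\Psi$ is a local biholomorphism near the zero section. The step you yourself flag as the main obstacle, however, is not resolved, and the set you propose there is wrong.

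\textbf{The set $S$.} Whatever ``the $\mu$-minimum of the orbit'' means, your $S$ lies inside the nullcone $\{z \in V : 0 \in \overline{(T_p)^\C z}\}$. By Lemma~\ref{closure}, the nullcone is the union of the coordinate subspaces $\C^I$ for which some $\nu$ is positive on $\{\eta_i\}_{i \in I}$. It therefore has empty interior as soon as the weights do not lie in an open half-space. This happens at every tall point of a complexity one space (Lemma~\ref{lem:nu}). It also happens at $p_1$ in Example~\ref{examp:1}, where the weights are $\pm 1$ and the nullcone is $\{z_0 z_2 = 0\}$. So $S$ is not an open neighbourhood of $0$, and its image under $\Psi$ is only the catchment $M_p$ (a finite union of orbits by Lemma~\ref{local}), not an open $U$. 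No moment-map condition is needed here: $S=(T_p)^\C\cdot B$ is automatically open and invariant.

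\textbf{Injectivity.} This is the real content of the theorem. You must show that $\sigma(b)=k\,\sigma(b')$ with $b,b'\in B$ and $k\in T^\C$ forces $k\in (T_p)^\C$. In other words, you must exclude $k=t\exp(i\xi)$ with $\xi$ large and transverse to $\mathfrak{t}_p$ carrying $\sigma(b')$ back near $p$. Your tools do not do this:
\begin{itemize}
\item Kempf--Ness uniqueness compares points of one $T^\C$-orbit in the \emph{same} fibre of $\phi$. But $\sigma(b)$ and $\sigma(b')$ generally have different moment images and cannot be pushed into a common fibre near $p$. In Example~\ref{examp:1}, the orbits of $M_{p_1}$ other than $\{p_1\}$ never meet $\phi^{-1}(\phi(p_1))$.
\item The gradient flow of $|\phi-\phi(p)|^2$ is tangent to $T^\C$-orbits and does not compare two representatives.
\item The symplectic normal form carries no information about the $T^\C$-action.
\end{itemize}
What is missing is a genuinely quantitative argument built on $\frac{d}{ds}\langle \phi(\exp(is\xi)x),\xi\rangle=\pm\|X_\xi(\exp(is\xi)x)\|^2$ (or on Heinzner's orbital convexity). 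That is exactly what Sjamaar supplies.

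\textbf{Your fallback.} This is what the paper actually does. Its proof consists of observing that $T\cdot p$ is isotropic, which is the hypothesis of \cite[Theorem 1.12]{Sjamaar2}. Isotropy holds because $\phi$ is $T$-invariant, so $\omega(X_\xi,X_\zeta)=-d\langle\phi,\xi\rangle(X_\zeta)=0$. The paper then reads off from Sjamaar's proof that a small ball $B$ in $(T_p\mathcal O)^\perp\simeq T_pM/T_p\mathcal O$ gives a $T^\C$-equivariant biholomorphism from $T^\C\times_{(T_p)^\C}(T_p)^\C\cdot B$ onto a neighbourhood of $\mathcal O$. If you take that route, you should say explicitly that isotropy is the hypothesis being verified. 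As a self-contained argument, your proposal does not prove the statement.
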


\begin{proof}

The $T$-orbit through $p$ is isotropic. Hence, 
as Sjamaar states in the proof of \cite[Theorem 1.12]{Sjamaar2}, there exists a small open ball $B$ about the origin in $(T_p \mathcal O)^\perp \simeq T_p M / T_p \mathcal O$ such that there is a $T^\C$-equivariant biholomorphism  from $T^\C \times_{(T_p)^{\C}} (T_{p})^{\C} \cdot B$  onto an open neighborhood of $\mathcal O$ in $M$ taking $[1,0,0]$ to $p$.
\end{proof}

 This theorem has the following important consequence.

\begin{lemma} \label{orbitclosure}
Let $(M,\omega,J,\phi)$ be a  compact K\"ahler Hamiltonian $T$-manifold,
 and consider the holomorphic $T^\C$-action extending the $T$-action.
Every $T^\C$-orbit  $\mathcal O$ contains a finite number of $T^\C$-orbits in its closure  $\overline{\mathcal {O}}$.\end{lemma}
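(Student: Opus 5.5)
The plan is to show that $M$ contains only finitely many $T^\C$-orbits altogether \emph{near} any given orbit closure. The key point is that each point has a $T^\C$-invariant neighborhood meeting only finitely many $T^\C$-orbit types in a controlled way. Compactness then reduces everything to finitely many local models, given by Theorem~\ref{reyer}.

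Fix $q \in M$. By Theorem~\ref{reyer} there is a $T^\C$-invariant open neighborhood $U_q$ of $T^\C\cdot q$ that is $T^\C$-equivariantly biholomorphic to $T^\C\times_{(T_q)^\C} S$. Here $S$ is a $(T_q)^\C$-invariant neighborhood of $0$ in the holomorphic slice. By Lemma~\ref{fact:comp} and Definition~\ref{def:sym-slice}, this slice is isomorphic to $\C^k$ with $(T_q)^\C$ acting diagonally with weights $\eta_1,\dots,\eta_k$. $T^\C$-orbits in $U_q$ correspond bijectively to $(T_q)^\C$-orbits in $S$.

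The first step is to show that a diagonal action of $H^\C$ on $\C^k$, with $H$ compact abelian, has only finitely many orbits that are \emph{closed in a neighborhood of} $0$ in the sense needed. Each orbit through a point $z$ lies in the coordinate stratum $(\C^\times)^I\times\{0\}$, where $I=\{i : z_i\neq 0\}$, and there are $2^k$ strata. Within a stratum the orbits are the fibers of a quotient of tori $(\C^\times)^I \to (\C^\times)^I/\mathrm{im}(H^\C)$, so in general there are infinitely many of them. Hence finiteness of orbits in $U_q$ is false. The argument must instead be: if $\mathcal O'\subseteq \overline{\mathcal O}$ and $\mathcal O'\subseteq U_q$, then $\mathcal O$ meets $U_q$ (since $U_q$ is open). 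In the model, $\mathcal O\cap U_q$ is a union of orbits lying in one stratum $(\C^\times)^I$. A limit orbit in a smaller stratum $(\C^\times)^{I'}$, $I'\subset I$, is obtained by sending the coordinates in $I\smallsetminus I'$ to zero along the closure. I will argue that this determines the limit orbit from the orbit of $z$ and the stratum $I'$. Taking limits of $h_n z$, the coordinates indexed by $I'$ converge in the quotient torus $(\C^\times)^{I'}/\mathrm{im}(H^\C)$, and by a compactness/valuation argument (one-parameter subgroups as in Lemma~\ref{closure}) each face $I'$ yields at most one limit orbit, up to the finite group $H/H_0$. Thus $\overline{\mathcal O}\cap U_q$ contains at most $2^k\,|H/H_0|$ orbits. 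The same bound holds whenever $\mathcal O\cap U_q$ consists of finitely many orbits. This in turn holds because $\mathcal O$ is a single $T^\C$-orbit, and its intersection with the $T^\C$-invariant set $U_q$ is either empty or all of $\mathcal O$.

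The conclusion then follows by compactness. The closure $\overline{\mathcal O}$ is compact and covered by the $U_q$ with $q\in \overline{\mathcal O}$, so finitely many suffice, and each contributes finitely many orbits of $\overline{\mathcal O}$.

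I expect the main obstacle to be the local claim that, in a diagonal torus representation, the closure of a single orbit $H^\C\cdot z$ contains only finitely many orbits. My approach is to reduce to $H^\C_0$ acting on $(\C^\times)^I$ via logarithms: the set $\{\log|w| : w \in H_0^\C z\}$ is an affine subspace $\log|z|+\mathrm{im}(\fh\to\R^I)$. Limits in the stratum $I'$ correspond to directions going to $-\infty$ exactly on $I\smallsetminus I'$. The $I'$-coordinates of the limits then lie in $\log|z|_{I'}$ plus the projection of the subspace of $\fh$ pairing to zero on $I'$ — more precisely, plus the image of that subspace, which is a fixed linear space. The arguments of the coordinates behave similarly, so the limits form a single $H_0^\C$-orbit in each stratum.
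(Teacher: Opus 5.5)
Your proposal is correct and is essentially the paper's argument. You reduce via compactness and Theorem~\ref{reyer} to a diagonal $H^\C$-action on $\C^k$. You then show that each coordinate stratum $(\C^\times)^{I'}$ meets $\overline{H^\C\cdot z}$ in at most one orbit, because the $I'$-projection of $H^\C\cdot z$ is the orbit of $z^{I'}$, which is closed in $(\C^\times)^{I'}$. The paper gets this closedness from rationality of the image subgroup; you get it from log--polar coordinates, where that image is a compact subgroup times a linear subspace.

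One small correction to your sketch: the linear space governing $\log|w|_{I'}$ at limit points is the image of all of $\fh$ under $\nu\mapsto(\langle\eta_j,\nu\rangle)_{j\in I'}$. It is not the image of the subspace of $\fh$ pairing to zero on $I'$, whose image in $\R^{I'}$ is zero.
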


\begin{proof}
Since $M$ is compact, it suffices to prove that every point $p$ in $M$ with $T^\C$-stabilizer $(T^\C)_p$ has a $T^\C$-invariant  neighborhood that
contains a finite number of orbits in $\overline{\mathcal O}$.
Hence, by Lemma~\ref{fact:stab} and Theorem~\ref{reyer}, it is enough to 
that each $(T^\C)_p$-orbit  on the symplectic slice at $p$ contains a finite number
of $(T^\C)_p$-orbits in its closure.  
Identify the symplectic slice with $\C^n$
so that $(T^\C)_p$ acts as a subgroup of $(\C^\times)^n$ and fix $z \in \C^n$.
Given $J \subseteq \{1,\dots,n\}$ such that $z_j \neq 0$ for all $j \in J$,
define 
a point $z^J \in (\C^\times)^J :=  \{ w \in \C^n \mid w_i \neq 0 \iff  i \in J \}$  by $$z^J_i =  \begin{cases} z_i & \text{if } i \in J \\ 0 & \text{if } i \not\in J \end{cases}.$$
By looking at coordinates, we see that 
\begin{equation} \label{eq:orbitj}\overline{(T^{\C})_p \cdot z} \cap (C^\times)^J \subseteq \overline{(T^{\C})_p \cdot z^J}.\end{equation}
On the other hand, since $(T^{\C})_p$ is closed, it is rational,
and so its image under the natural projection $(\C^\times)^n \to (\C^\times)^J$
 is rational, and hence closed.  Therefore, $(T^{\C})_p \cdot z^J$ is a closed subset of $(\C^\times)^J$.
By \eqref{eq:orbitj}, this implies that $\overline{(T^{\C})_p \cdot z} \cap (\C^\times)^J$ contains at most one $(T^\C)_p$-orbit.\end{proof}

\section{The set $\Mcore$}\label{sec:mcore}

Let $\tka$ be a   K\"ahler complexity one Hamiltonian $T$-manifold;  assume that the $T$-action extends to a holomorphic $T^\C$-action. 
We define the {\bf core} of $M$, denoted $\Mcore$,  to be the union of all the $T^\C$-orbits whose closures
contain tall exceptional points. 
Our goal in this section is to show that   if $M$ is compact then  the orbital moment map  $\bar{\phi} \colon M/T \to \ft^*$ restricts to a {\bf local homeomorphism} from $(\overline{\Mcore} \cap \Mtall)/T$ to $\phi(\Mtall)$, that is,
for all $p \in \overline{\Mcore} \cap \Mtall$  there
exists a neighborhood $W$ of $[p]$ in $(\overline{\Mcore} \cap \Mtall)/T$ such
that $\bar{\phi} \colon W \to \phi(M_\tall)$ is   a homeomorphism onto an open subset of $\phi(M_\tall)$;  see Proposition~\ref{localhomeob}.
 We begin with an important definition, and then describe some explicit examples.

\begin{definition}
  The {\bf catchment} of  a point $p \in M$,  denoted $M_p$,  is the union of the $T^\C$-orbits whose closure contains $p$, that is,
     $$M_p:=\{m \in M \mid p \in \overline{T^\C \cdot m}\}.$$
\end{definition}
Note that, 
by definition,
    $$\Mcore= \cup_{p \in \Mtall \cap \Mexc} M_p.$$

\begin{example}\label{examp:1}
Let the circle $S^1$ act on $\CP^2$ by $\lambda \cdot [z_0,z_1,z_2] = [\lambda z_0, z_1, \lambda^{-1} z_2]$ with moment map $ \phi([z_0,z_1,z_2])=  \frac{1}{2} \frac{|z_0|^2 - |z_2|^2}{ |z|^2}.$
The action, which extends to a holomorphic $\C^\times$-action,
is free except three fixed points $p_0 := [1,0,0],  p_1 := [0,1,0]$, and $p_2 := [0,0,1]$ and a sphere $N := \{z_1 = 0\}$  that is fixed by $\Z_2$; see Figure \ref{graph-hirz-0}.
Moreover, 
\begin{gather*} M_{p_0} = \{z_0 \neq 0\}, \quad M_{p_1} = \{z_0 = 0 \text{ and } z_1 \neq 0 \} \cup \{z_2 = 0 \text{ and } z_1 \neq 0 \}, \quad M_{p_2} = \{ z_2 \neq 0 \}, \quad \text{and} \\
M_q = N \smallsetminus \{p_0,p_2\} \quad \forall q \in N \smallsetminus \{p_0,p_2\}.
\end{gather*}
Since $p_0$ and $p_2$ are the only short points, and since $\Mexc = N \cup p_1$,
\begin{gather*}  \Mcore = M_{p_1} \cup N \smallsetminus \{p_0,p_2\} =
\{ z_i = 0  \text{ for some } i \} \smallsetminus \{p_0,p_2\}, \\
\overline{\Mcore} = \{ z_i = 0 \text{ for some } i \}, \quad \text{and} \quad \overline{\Mcore} \cap \Mtall = \Mcore.
\end{gather*}
\end{example}

\begin{figure}[ht]
\begin{center}
\begin{tikzpicture}
[
	point/.style = {draw, circle,  font=\tiny, fill = black, inner sep = 1.5pt},
	every edge quotes/.style = {font=\tiny, color = black, auto=left, inner sep = 1pt},
]
\draw node at (0.5,0) [point]{};
   \draw node at (-0.5,1) [point]{};
 \draw node at (0.5,2) [point]{};
	\draw [red, thick, dotted]
	(-0.5,1) edge  (0.5, 2);
	\draw [red, thick, dotted]
	(0.5,0) edge  (-0.5,1);
	\draw[ black, thick, dashed]
(0.5,0) edge (0.5,2);
       \node at (-.7,1.25) {$p_1$};
        \node at (.25,2.15) {$p_0$}; 
        \node at (.8,.2) {$p_2$};
        \end{tikzpicture}
\caption{The fixed points of the $S^1$-action in Example \ref{examp:1} and their moment images. The set $\overline{\Mcore}$ is indicated by the dashed and dotted lines; the dashed black line corresponds to the sphere $N$ that is fixed by $\Z_2$. }\label{graph-hirz-0}
\end{center}
\end{figure}
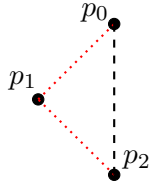

\begin{example}\label{examp:2}
The torus $K := (S^1)^2$ acts on  $\C^4$ by $(\alpha, \beta) \cdot z =
(\alpha z_0, \alpha z_1,  \alpha \beta z_2, \beta z_3)$ with moment map $\psi(z) =  \frac{1}{2} ( |z_0|^2 + |z_1|^2 + |z_2|^2, |z_2|^2 + |z_3|^2).$  The reduced 
space $M := \psi^{-1}(2, 1)/K$ 
is  a Hirzebruch surface with a K\"ahler structure induced from the natural identification $M \simeq  (\C^2 \smallsetminus \{0\})^2/K^\C$.  The  circle $S^1$ acts on $M$ by $\lambda \cdot [z] = [\lambda z_0, z_1, z_2, z_3]$ with moment map $\phi(z) = 
\frac{1}{2} |z_0|^2$. The action, which naturally extends to a holomorphic $\C^\times$-action,  is free except for one fixed sphere $N := \{z_0 = 0 \}$  and two isolated fixed points
 $p := \{ z_1 = z_3 = 0\}$ and  $q := \{  z_1 = z_2 = 0\}$  with moment images $0$,  $1$, and $2$, respectively; see  Figure \ref{graph-hirz-2}.
Moreover, 
  $$ M_p = \{ z_1 z_3 = 0  \} \smallsetminus (\{q\} \cup N) \quad \text{and} \quad
 M_q = \{ z_0 z_3  \neq 0 \}.
 $$
 Since $q$ is the only short point and $p$ and $q$ are the only  exceptional points, 
$\Mcore = M_p$, and so
$$\overline{\Mcore} = \{ z_1 z_3 = 0\} \quad \text{and} \quad
\overline{\Mcore} \cap \Mtall = \{z_1 z_3 = 0\} \smallsetminus \{q\}.$$

\end{example}

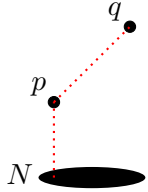
\begin{figure}[ht]
\begin{center}
\begin{tikzpicture}
[
	point/.style = {draw, circle,  font=\tiny, fill = black, inner sep = 1.5pt},
	every edge quotes/.style = {font=\tiny, color = black, auto=left, inner sep = 1pt},
]
   \draw node at (-0.5,1) [point]{};
 \draw node at (0.5,2) [point]{};
 \fill (0,0) ellipse [x radius=20pt, y radius=4pt] node[below=2pt]{};
	\draw [red, thick, dotted]
	(-0.5,1) edge  (0.5, 2);
	\draw [red, thick, dotted]
	(-0.5,0) edge  (-0.5,1);
       \node at (-.7,1.2) {$p$};
        \node at (.3,2.2) {$q$}; 
        \node at (-.95,0.05) {$N$};
        \end{tikzpicture}
\caption{The fixed components of the $S^1$-action in Example \ref{examp:2} and their moment images. The set $\overline{\Mcore}$ is indicated by the dotted lines.}\label{graph-hirz-2}
\end{center}
\end{figure}

It is straightforward to check that the orbital moment map $\overline{\phi}$ does restrict to a local homeomorphism from $(\overline{\Mcore} \cap \Mtall)/S^1$ to $\phi(\Mtall)$ in Examples~\ref{examp:1} and \ref{examp:2}. Note that, in both cases,
the given $S^1$-action is the restriction of a toric $(S^1)^2$-action.

{ Our next goal is to show that every compact K\"ahler complexity one manifold $\tka$ has a finite number of tall exceptional $T^\C$-orbits. (Recall that in the   compact case the $T$-action always extends  to a unique holomorphic $T^\C$-action;
see \S \ref{ss:holomorphic}.)  First, we need to show that we are justified in speaking of ``exceptional" and ``tall" $T^\C$-orbits.

\begin{lemma}\label{lem:exporbit} 
Let $\tka$ be a   K\"ahler complexity one Hamiltonian $T$-manifold;  assume that the $T$-action extends to a holomorphic $T^\C$-action.
If $p \in M$ is tall, then every point in the orbit $T^\C \cdot p$ is tall;
similarly, if $p \in M$ is exceptional, then every point in the orbit $T^\C \cdot p$ is exceptional.
\end{lemma}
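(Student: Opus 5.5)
The plan is to reduce both statements to the slice-representation criteria of Section~\ref{sec:2}, and to show that these slice data are constant along a $T^\C$-orbit. By Lemma~\ref{lem:KT-pos-combo}, tallness of a point is determined by its stabilizer $H$ and slice representation $\rho$: $p$ is tall exactly if the generator $\xi$ of the kernel relation can be chosen non-negative. By Lemma~\ref{fact:exp}, exceptionality of $p$ is determined by whether the $H$-fixed subspace of the slice is trivial. So it suffices to show that for any $q = \gamma \cdot p$ with $\gamma \in T^\C$, the $T$-stabilizers $T_q$ and $T_p$ coincide and the slice representations at $p$ and $q$ are isomorphic as complex representations.

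For the stabilizers, I will use Lemma~\ref{fact:stab}. Since $T^\C$ is abelian, $(T^\C)_q = \gamma (T^\C)_p \gamma^{-1} = (T^\C)_p$. By Lemma~\ref{fact:stab}, $(T^\C)_p = (T_p)^\C$ and $(T^\C)_q = (T_q)^\C$. I then recover $T_p = (T_p)^\C \cap T$ and likewise for $q$, which gives $T_p = T_q =: H$. The identity $G^\C \cap T = G$ for a closed subgroup $G \subseteq T$ is standard: write $T^\C \cong T \times \ft$ via polar decomposition, so that $G^\C$ corresponds to $G \times \fg$.

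For the slice representations, I will pass to the holomorphic slice using Lemma~\ref{fact:comp}. The symplectic slice at $p$, viewed as a complex $H$-representation, is isomorphic to the restriction to $H$ of the $(T^\C)_p$-representation on $T_pM/T_p\mathcal O$, and the same holds at $q$. The differential $d\gamma \colon T_pM \to T_qM$ is complex linear, because the action is holomorphic. It maps $T_p\mathcal O$ onto $T_q\mathcal O$, since $\mathcal O$ is $\gamma$-invariant. Because $T^\C$ is abelian, $d\gamma$ intertwines the action of $h \in H$ at $p$ with its action at $q$. It therefore descends to an $H$-equivariant complex isomorphism $T_pM/T_p\mathcal O \to T_qM/T_q\mathcal O$. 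Consequently the slice representations at $p$ and $q$ are isomorphic, with the same weights and the same $\xi$ up to sign.

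Applying Lemma~\ref{lem:KT-pos-combo} then shows that $q$ is tall if and only if $p$ is. Applying Lemma~\ref{fact:exp} shows that $q$ is exceptional if and only if $p$ is, because an equivariant isomorphism preserves $H$-fixed subspaces. The only point needing some care is the stabilizer identification $T_p = T_q$. I expect this to be routine once Lemma~\ref{fact:stab} is invoked together with the polar decomposition of $T^\C$.
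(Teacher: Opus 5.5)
Your proof is correct and follows essentially the same route as the paper: the $T^\C$-stabilizer and the holomorphic slice representation are constant along $T^\C \cdot p$, Lemma~\ref{fact:comp} transfers this to the $T$-stabilizer and the symplectic slice, and Lemmas~\ref{lem:KT-pos-combo} and \ref{fact:exp} finish the argument. The stabilizer step is simpler than you make it, since $T_q = (T^\C)_q \cap T = (T^\C)_p \cap T = T_p$ holds directly from the definitions, so neither Lemma~\ref{fact:stab} nor polar decomposition is needed.
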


\begin{proof}
Every point in $ T^\C \cdot p$ has the same $T^\C$-stabilizer and slice representation. So by Lemma~\ref{fact:comp}, they also have the same $T$-stabilizer and slice representation. 
Therefore, the claim follows immediately from Lemmas~\ref{lem:KT-pos-combo} and \ref{fact:exp}.
\end{proof}

\begin{lemma}\label{finiteexceptional}
Every compact  K\"ahler complexity one Hamiltonian $T$-manifold $\tka$ contains finitely many exceptional $T^\C$-orbits.\end{lemma}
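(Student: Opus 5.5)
Since $M$ is compact, the plan is a local-to-global argument. Every point $p \in M$ will get a $T^\C$-invariant open neighborhood $U_p$ that meets only finitely many exceptional $T^\C$-orbits. The $U_p$ cover $M$, so finitely many of them cover $M$, and every $T^\C$-orbit lying in $M$ lies in (indeed is contained in) one of them, since each $U_p$ is $T^\C$-invariant. Hence there are only finitely many exceptional $T^\C$-orbits in total. By Lemma~\ref{lem:exporbit}, exceptionality is a property of $T^\C$-orbits, so this counting makes sense.

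For the local step, fix $p$ and apply Theorem~\ref{reyer}. This gives a $T^\C$-equivariant biholomorphism $T^\C \times_{(T_p)^\C} S \to U$, where $S$ is a $(T_p)^\C$-invariant neighborhood of $0$ in the holomorphic slice. By Lemma~\ref{fact:comp}, we may identify the slice with $\C^{h+1}$, with $H:=T_p$ acting through $\rho \colon H \hookrightarrow (S^1)^{h+1}$ with weights $\eta_0,\dots,\eta_h$; here $h=\dim H$ by the complexity one assumption. The $T^\C$-orbits in $U$ correspond bijectively to the $H^\C$-orbits in $S$. Moreover, a point $[t,z]$ has $T^\C$-stabilizer equal to the $H^\C$-stabilizer of $z$, and by Lemma~\ref{fact:stab} and Lemma~\ref{fact:comp} its slice representation is the slice representation of $z$ in $\C^{h+1}$ as an $H^\C$-space. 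So it suffices to show that $\C^{h+1}$ contains finitely many $H^\C$-orbits that are exceptional in the sense of Lemma~\ref{fact:exp}: those whose $H$-stabilizer fixes no nonzero vector of its slice.

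To count these, stratify $\C^{h+1}$ by the coordinate strata $(\C^\times)^J$ for $J\subseteq\{0,\dots,h\}$. There are finitely many strata, so it is enough to show that each stratum contains either no exceptional orbits or exactly one orbit. Fix $J$ and $z\in(\C^\times)^J$. The stabilizer $K$ of $z$ in $H$ is the common kernel of $\rho_j$ for $j\in J$. The slice at $z$ is the quotient of $\C^{h+1}$ by the tangent space to $H^\C\cdot z$. Since $H^\C\cdot z \subseteq (\C^\times)^J$, this slice is $\C^{J^c}\oplus(\C^J/T_z(H^\C z))$, and $K$ acts trivially on the second summand. So if $z$ is exceptional, that summand must vanish, which means $H^\C\cdot z$ is open in $(\C^\times)^J$. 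By Lemma~\ref{fact:stab} the orbit is closed in $(\C^\times)^J$ (its stabilizer is the complexification of a compact group, so the orbit map is proper onto its image, or argue via rationality as in the proof of Lemma~\ref{orbitclosure}). The stratum $(\C^\times)^J$ is connected, so an open and closed orbit is the whole stratum. Hence each stratum contains at most one exceptional orbit, and the local step follows.

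I expect the main obstacle to be the dimension claim, namely that $K$ acting trivially on $\C^J/T_z(H^\C z)$ forces that quotient to be zero in the exceptional case. This also requires checking that the slice splitting is $K$-equivariant. The equivariance is immediate because the coordinate decomposition is $H$-invariant and $K$ acts trivially on $\C^J$. The closedness of orbits within a stratum must also be justified carefully; the rationality argument already used in Lemma~\ref{orbitclosure} handles it.
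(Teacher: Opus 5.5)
Your proof is correct and follows essentially the same route as the paper. Compactness reduces the statement to a local count, Theorem~\ref{reyer} reduces that count to the model $T^\C\times_{H^\C}\C^{h+1}$, and each coordinate stratum contains at most one exceptional orbit because the stabilizer acts trivially on the directions of the stratum transverse to the orbit. The paper phrases this last step as the dichotomy ``$H^\C\to(\C^\times)^I$ is surjective, or its image has codimension one and the slice acquires a trivial line''; you instead argue that exceptionality forces the orbit to be open in the stratum.

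One small remark concerns closedness. The parenthetical justification via properness of the orbit map is not a valid reason in general, since reductive stabilizers do not force closed orbits. No separate closedness argument is needed, though. $H^\C\cdot z$ is a coset of the image subgroup $G\subseteq(\C^\times)^J$, so openness makes $G$ an open, hence closed, subgroup of the connected group $(\C^\times)^J$.
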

\begin{proof} 

Since the manifold $M$ is compact,
 it is enough to prove that every point $p \in M$ has a neighborhood 
that intersects finitely many exceptional $T^\C$-orbits.
 By Theorem \ref{reyer},  a $T^{\C}$-invariant neighborhood of $p$  is $T^\C$-equivariantly biholomorphic to  a neighborhood of the central orbit in
$$Y = T^{\C} \times_{H^{\C}} {\C}^{h+1},$$
where  $H$ is the $T$-stabilizer of $p$, of dimension $h$,  and $H^\C  \subseteq T^\C$ acts on $\C^{h+1}$ via a faithful representation $H^\C    \hookrightarrow (\C^\times)^{h+1}$.  Given $I \subseteq \{0,\dots,h\}$ define
$$Y_I := \{ [t, z] \in T^\C \times_{H^\C} \C^{h+1}  \mid z_i \neq 0 \iff i \in I \}.$$
Either the induced homomorphism $H^\C \to (\C^\times)^I$ is surjective, or its image
has codimension one.
In the former case, 
the $T^\C$-action on $Y_I$ is transitive. 
In the latter case,
the orbit $T^\C \cdot q$ has codimension one in $Y_I$ for all $q \in Y_I$. Hence, the holomorphic slice 
is $T_q Y_I/T_q (T^\C \cdot q) \simeq  \C$.
Since  every point in $Y_I$ has the same $T^\C$-stabilizer,
this implies that $H^{\C}$ acts trivially on a one-dimensional
subspace of the holomorphic slice  at $q$ in $M$.
Therefore, by Lemmas~\ref{fact:exp} and \ref{fact:comp}, no orbits in $Y_I$ are exceptional.
In either case, $Y_I$ contains at most one exceptional orbit.

\end{proof}
 We now turn to studying the catchment of a tall point $p$. In particular, by Lemma~\ref{local} below,
the catchment $M_p$ contains finitely many $T^\C$-orbits.
In contrast, if $p$ is short then  by Theorem~\ref{reyer}  and Lemmas \ref{lem:nu} and \ref{closure},
the catchment $M_p$ contains infinitely many $T^\C$-orbits.

\begin{lemma}\label{local} 
Let $\tka$ be a   K\"ahler complexity one Hamiltonian $T$-manifold;
assume that the $T$-action extends to a holomorphic $T^\C$-action.
Consider a point $p \in \Mtall$ with stabilizer $H \subseteq T$ and  isotropy weights $\eta_0,\dots  \eta_h \in \fh^*$. Let $\mathcal I$
be the set of subsets $I \subseteq \{0,\dots,h\}$ such  that $\{\eta_i\}_{i \in I}$ is linearly independent. There is a bijection $I \mapsto Y_I$  between $\mathcal I$ 
and the set of $T^\C$-orbits  in the catchment $M_p$.
Moreover, there exists a $T^\C$-invariant neighborhood $V$ of $p$ such that, for each $I \in \mathcal I$:

\begin{enumerate}
\item $\overline{Y_I} \cap V$ is $T^\C$-equivariantly biholomorphic to $T^\C \times_{H^\C} \C^I$,
where  $H$ acts on $\C^I$  with weights $\{\eta_i\}_{i \in I}$.
\item  $\overline{Y_I} \cap V = \cup_{J \subseteq I} Y_J$. 
\end{enumerate}
\end{lemma}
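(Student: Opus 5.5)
We work in Sjamaar's local model. By Lemma~\ref{fact:stab}, Theorem~\ref{reyer} and Lemma~\ref{fact:comp}, there is a $T^\C$-invariant neighborhood $U$ of $T^\C\cdot p$ with a $T^\C$-equivariant biholomorphism onto $T^\C\times_{H^\C} S$. Here $S\subseteq \C^{h+1}$ is an $H^\C$-invariant neighborhood of $0$, and $H^\C$ acts on coordinate $i$ with weight $\eta_i$. Shrinking if necessary, we take $S=H^\C\cdot B$ for a small ball $B$, as in the proof of Theorem~\ref{reyer}, and set $V:=U$.

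The first step is to describe the catchment inside $V$. For $I\subseteq\{0,\dots,h\}$, let $Y_I=\{[t,z] \mid z_i\ne 0\iff i\in I\}$, where the coordinates are taken in $S$. If $m\in M_p$, then $T^\C\cdot m$ meets every neighborhood of $p$, hence meets $V$; since $V$ is $T^\C$-invariant, $T^\C\cdot m\subseteq V$. In the model, $p\in\overline{T^\C\cdot[t,z]}$ exactly if $0\in\overline{H^\C\cdot z}$. This uses that $T^\C\times_{H^\C}S\to T^\C/H^\C$ is a bundle and that convergence to the zero section over $[1]$ can be arranged after translating by $T^\C$. If $z\in Y_I$, Lemma~\ref{closure} says this happens iff some $\nu\in\fh$ has $\langle\eta_i,\nu\rangle>0$ for all $i\in I$. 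Since $p$ is tall, Lemma~\ref{lem:nu} says this holds iff $I\in\mathcal I$. When $I\in\mathcal I$, the weights $\{\eta_i\}_{i\in I}$ are independent, so $H^\C\to(\C^\times)^I$ has finite kernel and image of full dimension. Because $H^\C$ is a connected-component-closed algebraic subtorus times a finite group, this image is all of $(\C^\times)^I$. Hence $H^\C$ acts transitively on $(\C^\times)^I\cap S$, where we use $S=H^\C\cdot B$ together with the contraction by $e^{-i t\nu}$ to see that all of $(\C^\times)^I$ lies in $S$. Consequently $Y_I$ is a single $T^\C$-orbit. This gives the bijection between $\mathcal I$ and the $T^\C$-orbits of $M_p$: these orbits lie in $V$, and different $I$ give different coordinate supports, hence different orbits.

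For (2), the closure of $(\C^\times)^I$ in $\C^{h+1}$ is $\C^I=\bigcup_{J\subseteq I}(\C^\times)^J$. Each $J\subseteq I$ is again in $\mathcal I$, so each $Y_J$ is a single orbit. Taking closures in the bundle gives $\overline{Y_I}\cap V=\bigcup_{J\subseteq I}Y_J$. Part (1) then follows: $\overline{Y_I}\cap V$ is the image of $T^\C\times_{H^\C}(\C^I\cap S)$, and $\C^I\cap S=\C^I$ by the previous paragraph, with $H$ acting on $\C^I$ through the weights $\eta_i$, $i\in I$. We also need $\overline{Y_I}$ to be closed in $M$ and its intersection with $V$ to equal the closure taken inside $V$; this holds automatically since $V$ is open.

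The main obstacle is the set-theoretic claim that $S$ contains all of $\C^I$ for $I\in\mathcal I$, together with its consequence that each $Y_I$ is one full orbit rather than several pieces. We handle it by choosing $S=H^\C\cdot B$ and using the $\nu$ from Lemma~\ref{lem:nu}: any $w\in(\C^\times)^I$ satisfies $e^{-\sqrt{-1}s\nu}w\in B$ for $s$ large, so $w\in H^\C\cdot B$. The case $J\subsetneq I$ works the same way with the same $\nu$.
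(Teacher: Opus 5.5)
Your proposal is correct and follows the same route as the paper. Both pass to Sjamaar's holomorphic model, sort $T^\C$-orbits by coordinate support, use Lemmas~\ref{closure} and~\ref{lem:nu} to decide which supports lie in $M_p$, and use surjectivity of $H^\C\to(\C^\times)^I$ to get that each $Y_I$ is a single orbit. Your extra step, showing $\C^I\subseteq S=H^\C\cdot B$ via the contracting direction $\nu$, fills in a detail the paper skips by simply assuming $M$ is the whole model $T^\C\times_{H^\C}\C^{h+1}$.

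One small slip: the kernel of $H^\C\to(\C^\times)^I$ need not be finite when $|I|<h$. You only need that its differential is surjective, which follows from linear independence of $\{\eta_i\}_{i\in I}$. That makes the image an open, hence closed, and therefore full subgroup of the connected group $(\C^\times)^I$.
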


\begin{proof}

Let $H^\C$ act on the $i$'th coordinate of $\C^{h+1}$ with weight $\eta_i$ for each $i$.
By Theorem \ref{reyer} and Lemma~\ref{fact:comp},
 we may assume that, as a complex space with a holomorphic $T^\C$-action,
$M$ is $Y = T^\C \times_{H^\C} \C^{h+1}$ and $p = [1,0,0].$

Given $I \subseteq \{0,\dots,h\}$  define $Y_I := T^\C \times_{H^\C} (\C^\times)^I$, where $(\C^\times)^I := \{z \in \C^{h+1} \mid z_i \neq 0 \iff i \in I\}$. Then $\overline{Y_I} = T^\C \times_{H^\C} \C^I$, where $\C^I := \{z \in \C^{h+1} \mid z_i = 0  \text{ if } i \not\in I \}.$
Fix $[t, z] \in Y_I$. By Lemma~\ref{closure}, the point $p$ is in the closure of the orbit $T^\C \cdot [t,z]$ exactly if there
exists $\nu \in \fh$ such that $\langle \eta_i, \nu \rangle > 0$ for all $i \in I$.
Since $p$ is tall, by Lemma~\ref{lem:nu} this is possible exactly if $\{ \eta_i \}_{i \in I}$ is linearly independant.
In this case, the map $H^\C \to (\C^\times)^I$ is surjective, and so 
 the $T^\C$-action on $Y_I$ is transitive.  The claim follows immediately.
\end{proof}

 Together, the above lemmas  imply that  $\overline{\Mcore}$ also contains
a finite number of $T^{\C}$-orbits.

\begin{lemma}\label{finitecore}
Let $\tka$ be a compact  K\"ahler complexity one Hamiltonian $T$-manifold. 
Then $\overline{\Mcore}$ contains a finite number of $T^\C$-orbits; moreover, $$\overline{\Mcore} = {\cup}_{q \in \Mtall \cap \Mexc} \overline{M_q}.$$
\end{lemma}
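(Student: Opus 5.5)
We prove both claims together, starting from the finiteness results already established. By Lemma~\ref{lem:exporbit}, the set $\Mtall \cap \Mexc$ is a union of $T^\C$-orbits. By Lemma~\ref{finiteexceptional}, there are only finitely many exceptional $T^\C$-orbits, so there are finitely many tall exceptional orbits; call them $\mathcal O_1,\dots,\mathcal O_m$, and choose $q_j \in \mathcal O_j$.

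First, the catchment depends only on the $T^\C$-orbit of a point. If $q' = \gamma \cdot q$ with $\gamma \in T^\C$, then $q \in \overline{T^\C\cdot m}$ exactly if $q' \in \gamma\cdot\overline{T^\C \cdot m} = \overline{T^\C\cdot m}$, so $M_{q'} = M_q$. Hence
$$\Mcore = \cup_{q \in \Mtall\cap\Mexc} M_q = M_{q_1}\cup\dots\cup M_{q_m}.$$
Each $q_j$ is tall, so by Lemma~\ref{local} each catchment $M_{q_j}$ is a union of finitely many $T^\C$-orbits, indexed by the finite set $\mathcal I$. Therefore $\Mcore$ is a finite union of $T^\C$-orbits $\mathcal O'_1,\dots,\mathcal O'_k$.

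Next we take closures. A closure of a finite union is the union of the closures, so $\overline{\Mcore} = \cup_i \overline{\mathcal O'_i}$. By Lemma~\ref{orbitclosure}, each $\overline{\mathcal O'_i}$ contains only finitely many $T^\C$-orbits, so $\overline{\Mcore}$ contains finitely many $T^\C$-orbits. The same computation applied to each catchment gives
$$\overline{\Mcore} = \overline{M_{q_1}}\cup\dots\cup\overline{M_{q_m}}.$$

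It remains to compare this with $\cup_{q \in \Mtall\cap\Mexc} \overline{M_q}$. Every $q$ in $\Mtall\cap\Mexc$ lies in some orbit $\mathcal O_j$, so $\overline{M_q} = \overline{M_{q_j}}$. Thus the union over all such $q$ equals the finite union $\overline{M_{q_1}}\cup\dots\cup\overline{M_{q_m}}$, which is $\overline{\Mcore}$. This proves the displayed equality. The inclusion $\supseteq$ also holds trivially, since each $M_q \subseteq \Mcore$.

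The main obstacle is that a union of infinitely many closures need not be closed, so the equality is not automatic. The key step is to reduce to finitely many catchments, using Lemma~\ref{finiteexceptional} together with the orbit-invariance of catchments. Once that reduction is made, the rest is formal.
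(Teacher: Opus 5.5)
Your proof is correct and follows essentially the same route as the paper: finitely many tall exceptional $T^\C$-orbits by Lemma~\ref{finiteexceptional}, finitely many orbits in each catchment by Lemma~\ref{local}, and Lemma~\ref{orbitclosure} to pass to closures. The only difference is cosmetic: you get the displayed equality by writing $\Mcore$ as a finite union of catchments and taking closures, while the paper argues pointwise that each $p \in \overline{\Mcore}$ lies in the closure of a single orbit $Y \subseteq M_q$. Your explicit check that $M_q$ depends only on the orbit $T^\C \cdot q$ makes precise a step the paper leaves implicit.
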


\begin{proof}
By Lemma~\ref{finiteexceptional},  $M$ contains finitely many tall exceptional $T^\C$-orbits, and by Lemma~\ref{local} the catchment of each such orbit contains
only a finite number of $T^\C$-orbits.  Hence,  by  Lemma~\ref{orbitclosure},  $\overline{\Mcore}$ contains finitely many $T^{\C}$-orbits.
So every point $p \in \overline{\Mcore}$ must lie in the closure
of some $T^\C$-orbit $Y \subseteq \Mcore$.  By the definition of
 $\Mcore$, there exists a tall exceptional point $q \in \overline{Y}$. Therefore
 $p \in \overline{M_q}$, and so  $\overline{\Mcore} = {\cup}_{q \in \Mtall \cap \Mexc} \overline{M_q}.$

\end{proof}

To prove our main proposition, we will need two key ingredients:
First, we will show  that near any
$p \in \overline{\Mcore} \cap \Mtall$, the set $\overline{\Mcore}$ is equal to $M_p$.   
Next, we will show that the  orbital   moment map $\overline{\phi}$ from 
the quotient $M_p/T$ to the moment cone $\overline{C_p}$ at $p$ is a local homeomorphism at $p$.

\begin{lemma} \label{finite2}
Let $\tka$ be a compact  K\"ahler complexity one Hamiltonian $T$-manifold.
Fix $p \in \overline{\Mcore} \cap \Mtall$.
There exists a $T^\C$-invariant neighborhood $V$ of $p$ such that $\overline{\Mcore} \cap V = M_p$.
\end{lemma}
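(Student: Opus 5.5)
We need to show that near a tall point $p\in\overline{\Mcore}$, the set $\overline{\Mcore}$ coincides with the catchment $M_p$. We prove the two inclusions separately, inside a suitably small $T^\C$-invariant neighborhood $V$ of $p$.

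The starting point is Lemma~\ref{finitecore}, which writes $\overline{\Mcore}=\cup_{q}\overline{M_q}$, the union taken over the finitely many tall exceptional $T^\C$-orbits $T^\C\cdot q$ (finite by Lemma~\ref{finiteexceptional}). Each $\overline{M_q}$ is a finite union of closures of $T^\C$-orbits: $M_q$ has finitely many orbits by Lemma~\ref{local}, and each orbit closure has finitely many orbits by Lemma~\ref{orbitclosure}. Hence $\overline{\Mcore}$ is a finite union of orbit closures $\overline{Y}$, where each $Y\subseteq\Mcore$ is a $T^\C$-orbit. A finite union of closed sets is closed, so this union is exactly the closure.

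Next, take $V$ from Lemma~\ref{local} applied at the tall point $p$, and if necessary shrink it, via Theorem~\ref{reyer}, to the model neighborhood $T^\C\times_{H^\C}S$. Any $T^\C$-orbit $Y$ meeting $V$ is then of the form $T^\C\times_{H^\C}(H^\C\cdot z)$ for some $z\in S$.

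The inclusion $\overline{\Mcore}\cap V\subseteq M_p$ is the crucial point. Let $x\in\overline{\Mcore}\cap V$. By the previous paragraph, $x$ lies in $\overline{Y}$ for some orbit $Y\subseteq\Mcore$. Since $x\in V$, the orbit $Y$ meets $V$, as $V$ is an open neighborhood of $x\in\overline Y$. We must show that $p\in\overline{T^\C\cdot x}$. For this we use $p\in\overline{\Mcore}$ itself: since there are finitely many orbits, $p$ lies in $\overline{Y'}$ for some orbit $Y'\subseteq\Mcore$. In the local model, $\overline{Y'}\ni p$ forces $Y'\subseteq M_p$, since $p$ lies in the closure of $Y'$.

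The difficulty is that an arbitrary $x$ could a priori lie on a different orbit not in $M_p$. To resolve this we work in the model $T^\C\times_{H^\C}\C^{h+1}$. Write $x=[t,w]$ and $Y\cap V=[T^\C,\ H^\C\cdot z]$ with $w\in\overline{H^\C z}$. Since $Y\subseteq\Mcore$, some tall exceptional point $q$ lies in $\overline Y$, possibly outside $V$. Here I would use that $V$ can be chosen so small that every orbit meeting $V$ whose closure meets $p$'s orbit... this is where I expect the main obstacle.

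My concrete plan for that obstacle is as follows. Each orbit $Y_I$ in the model (coordinate pattern $I$) that is not in $M_p$ has, by Lemma~\ref{lem:nu}, $\{\eta_i\}_{i\in I}$ linearly dependent. Then $H^\C\to(\C^\times)^I$ has codimension-one image, so $Y_I$ consists of non-exceptional orbits, as in the proof of Lemma~\ref{finiteexceptional}, forming a one-parameter family. Their closures inside $V$ are $\overline{Y_J}$ for $J\subsetneq I$ together with the single orbit through $z$. Globally, $\overline{\Mcore}$ is a finite union of orbits, so it contains only finitely many orbits from this one-parameter family. I would then show that a generic such orbit cannot lie in $\Mcore$. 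The argument: $\Mcore$ orbits have closures containing exceptional tall points, and the moment image of their $T$-quotients is constrained. Using the finiteness, shrink $V$ to avoid all non-$M_p$ orbits in $\overline{\Mcore}$, none of which contains $p$ in its closure; their closures are closed sets not containing $p$ in the relevant sense. Concretely, each orbit $Y$ of $\overline{\Mcore}$ not contained in $M_p$ satisfies $p\notin\overline Y$, so $M\smallsetminus\overline{Y}$ is an open neighborhood of $p$. Because $\overline Y$ is $T^\C$-invariant, this neighborhood is $T^\C$-invariant. Intersecting these finitely many neighborhoods with $V$ gives the desired $V$. This last step is in fact clean, and it removes the obstacle: then $\overline{\Mcore}\cap V$ is a union of orbits in $M_p$.

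For the reverse inclusion $M_p\subseteq\overline{\Mcore}$, take the orbit $Y'\subseteq\Mcore$ with $p\in\overline{Y'}$, which exists by finiteness. By Lemma~\ref{local}, $Y'\cap V=Y_{I_0}$ for some $I_0\in\mathcal I$, and $\overline{Y_{I_0}}$ contains every $Y_J$ with $J\subseteq I_0$. This gives only part of $M_p$. To obtain all of $M_p$, I would argue that $Y_{I_0}$ must be the open orbit with $I_0$ maximal. Alternatively, I would use the tall exceptional point $q$ in $\overline{Y'}$ and show that $M_q\supseteq M_p$, since $p\in\overline{M_q}$ and catchments are transitive: if $p\in\overline{T^\C m}$ and $q\in\overline{T^\C p}$, then $q\in\overline{T^\C m}$. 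This requires $q\in\overline{T^\C\cdot p}$, which fails in general. So instead I would show that $p$ itself lies in $\overline{Y'}$ with $Y'$ a top-dimensional orbit of $M_p$, and that every $Y_I$ with $I\in\mathcal I$ lies in the closure of some element of $\Mcore$. This uses that maximal linearly independent subsets $I$ give orbits whose closure in $M$ contains a tall exceptional point, by compactness and a limit along $\nu$. I expect this last claim to be the genuine difficulty.
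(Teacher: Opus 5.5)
Your proof of $\overline{\Mcore}\cap V\subseteq M_p$ is sound and is essentially the paper's argument. $\overline{\Mcore}$ consists of finitely many $T^\C$-orbits, the orbits not contained in $M_p$ have closures that miss $p$, and removing these finitely many closed invariant sets leaves an invariant neighborhood of $p$. That neighborhood automatically contains $M_p$, since each orbit of $M_p$ meets every invariant neighborhood of $p$.

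The gap is the reverse inclusion $M_p\subseteq\overline{\Mcore}$. You explicitly leave it open, and neither of your proposed routes works.

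\begin{itemize}
\item The orbit $Y'=Y_{I_0}\subseteq\Mcore$ with $p\in\overline{Y'}$ need not have $I_0$ maximal. If $p$ is tall exceptional, you can take $Y'=T^\C\cdot p$, so $I_0=\emptyset$. Moreover, at an exceptional $p$ the set $\mathcal I$ usually has several maximal elements, so there is no single \emph{open orbit} of $M_p$.
\item The idea of \emph{compactness and a limit along $\nu$} cannot work, because it never uses the hypothesis $p\in\overline{\Mcore}$, and without that hypothesis the claim is false. In Example~\ref{examp:1}, a point with $z_0z_1z_2\neq 0$ is tall with trivial stabilizer. Here $\mathcal I=\{\emptyset\}$, so its catchment is just its own $\C^\times$-orbit. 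The closure of that orbit is a conic that meets $\Mexc$ only in the short points $p_0,p_2$.
\end{itemize}

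The missing idea is to split according to whether $p$ is exceptional.

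\begin{itemize}
\item If $p$ is exceptional, then $p$ is a tall exceptional point, and $M_p\subseteq\Mcore$ by the very definition of $\Mcore$.
\item If $p$ is not exceptional, Lemma~\ref{fact:exp} produces a coordinate of the slice on which $H$ acts trivially. Faithfulness of the slice representation then forces the remaining $h$ weights to be a basis of $\fh^*$. Hence $\mathcal I$ is exactly the set of subsets of the index set $I_{\max}$ of the nonzero weights, and Lemma~\ref{local} gives $M_p\subseteq\overline{Y_{I_{\max}}}$. Your orbit $Y'$ lies in $M_p\subseteq\overline{Y_{I_{\max}}}$, so the tall exceptional point in $\overline{Y'}$ also lies in $\overline{Y_{I_{\max}}}$. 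Thus $Y_{I_{\max}}\subseteq\Mcore$, and $M_p\subseteq\overline{Y_{I_{\max}}}\subseteq\overline{\Mcore}$.
\end{itemize}

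The paper instead obtains the tall exceptional point as some $q\in\overline{M_p}$ via Lemma~\ref{finitecore}, which amounts to the same thing.
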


\begin{proof} 
First, we will show that $M_p:= \{m \in M \mid p \in \overline{ T^\C \cdot m}  \} \subseteq \overline{\Mcore}$.   Let $H$ be the $T$-stabilizer of $p$. If $p$ is exceptional then $M_p \subseteq { \Mcore}\subseteq \overline{ \Mcore}$ by definition.  
So assume that $p$ is not exceptional.    Then by Lemma~\ref{fact:exp}, one of the weights of the isotropy representation at $p$ is zero.  
Since the action is effective, the remaining isotropy weights form a basis for $\mathfrak h^*$,  and so a subset of the isotropy weights is linearly independent exactly if
it is a subset of the set of non-zero weights. Therefore, by Lemma~\ref{local}, there exists a $T^\C$-orbit $Y$ such that  $M_p \subseteq \overline{Y}.$
By Lemma~\ref{finitecore},  there exists a tall exceptional point
$q$ so that $p \in \overline{M_q}$, or equivalently, $q \in \overline{M_p}.$  Hence, $q \in \overline{Y}$, and so $Y \subseteq \Mcore.$ Therefore
$M_p \subseteq \overline{Y} \subset \overline{\Mcore}$.

By definition, if a $T^\C$-orbit lies in $\overline{\Mcore} \smallsetminus M_p$ then its closure also lies in $\overline{\Mcore} \smallsetminus M_p$.
Moreover, by Lemma~\ref{finitecore},  $\overline{\Mcore} \smallsetminus M_p$ contains a finite number of $T^\C$-orbits.
Therefore,  $\overline{\Mcore} \smallsetminus M_p$ is closed,
or equivalently,
 $V:=  (M \smallsetminus {\overline{\Mcore}}) \cup M_p$ is open.
 The claim follows immediately.

\end{proof}

\begin{lemma}\label{homeotall}
Let $\tka$ be a  K\"ahler complexity one Hamiltonian $T$-manifold;  assume that the $T$-action extends to a holomorphic $T^\C$-action.
Given  $p \in \Mtall$, 
there exists a $T$-invariant neighbourhood $U$ of $p$  such that the orbital moment map $\bar\phi \colon M/T \to \ft^*$  restricts to 
 a homeomorphism from $(M_p \cap U)/T$ to an open subset of the moment cone $\overline{C_p}$ of $p$.
\end{lemma}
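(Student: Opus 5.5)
We reduce to a local computation near $p$, treating the catchment one $T^\C$-orbit at a time via Lemma~\ref{local}, and then glue the pieces using Lemma~\ref{cor:linear}. Let $H$ be the stabilizer of $p$ with isotropy weights $\eta_0,\dots,\eta_h$. Let $\mathcal I$ be the set of linearly independent index subsets. By Lemma~\ref{local}, $M_p=\bigcup_{I\in\mathcal I}Y_I$, and near $p$ each closure $\overline{Y_I}$ is $T^\C$-equivariantly biholomorphic to $T^\C\times_{H^\C}\C^I$. Since $\{\eta_i\}_{i\in I}$ is linearly independent and $H$ acts effectively on the slice, $\overline{Y_I}\cap V$ should be a complex submanifold of $M$ (of the form $T^\C\times_{H^\C}\C^I$, a smooth $T^\C$-invariant submanifold). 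It is therefore a Kähler, hence symplectic, submanifold, invariant under $T$. The restricted $T$-action is Hamiltonian with moment map $\phi|_{\overline{Y_I}}$.

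\emph{Complexity zero pieces.} Next we compute the complexity of $\overline{Y_I}$. Its complex dimension is $\dim_\C T^\C-\dim_\C H^\C+|I|=\dim T-h+|I|$. The torus acting effectively on it is $T/K_I$, where $K_I\subseteq H$ is the kernel of the $H$-action on $\C^I$. Because the $\eta_i$ with $i\in I$ are independent, $\dim K_I=h-|I|$, so $\dim(T/K_I)=\dim T-h+|I|$ and $\overline{Y_I}$ is a complexity zero Hamiltonian $T/K_I$-manifold. We then apply Lemma~\ref{le:eqdarboux} at $p$ to $\overline{Y_I}$. The isotropy weights of $p$ in $\overline{Y_I}$ are $\{\eta_i\}_{i\in I}$; here we use Lemma~\ref{fact:comp} to identify the symplectic slice with the holomorphic slice $\C^I$. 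This yields a $T$-invariant neighbourhood $W_I$ of $p$ in $\overline{Y_I}$ such that $\bar\phi$ maps $W_I/T$ homeomorphically onto an open neighbourhood of $\phi(p)$ in $\overline{C_I}=\phi(p)+\fh^\circ+\sum_{i\in I}\R_{\ge0}\eta_i$. Moreover, the free-orbit part of $W_I/T$, which is exactly $(Y_I\cap W_I)/T$, maps onto the relative interior $C_I$.

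\emph{Gluing.} Choose a single $T$-invariant neighbourhood $U$ of $p$ with $U\cap\overline{Y_I}\subseteq W_I$ for all $I\in\mathcal I$; this is possible because $\mathcal I$ is finite. By Lemma~\ref{local}(2), $M_p\cap U=\bigsqcup_{I}(Y_I\cap U)$, and each $Y_I\cap U$ maps bijectively to an open subset of $C_I$. By Lemma~\ref{cor:linear} the $C_I$ are pairwise disjoint and their union is $\overline{C_p}$, so $\bar\phi\colon(M_p\cap U)/T\to\overline{C_p}$ is injective. We shrink $U$ (for instance to $\phi^{-1}(B)\cap V$ for a small ball $B$ about $\phi(p)$, using properness near $p$) so that each piece has image exactly $C_I\cap B$. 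Then the image is $\overline{C_p}\cap B$, which is open in $\overline{C_p}$.

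\emph{Main obstacle.} The hardest step is showing that the inverse is continuous, that is, that $\bar\phi$ is a homeomorphism and not merely a continuous bijection. I would handle this by observing that $(M_p\cap U)/T$ is covered by the finitely many sets $(\overline{Y_I}\cap U)/T$. Each of these is closed in $(M_p\cap U)/T$ and maps homeomorphically onto the closed subset $\overline{C_I}\cap B$ of $\overline{C_p}\cap B$. A continuous bijection that restricts to homeomorphisms on a finite closed cover whose images also form a closed cover is a homeomorphism, by the pasting lemma applied to the inverse. Care is needed to make the choice of $U$ and $B$ compatible with each $W_I$, so that each $\overline{Y_I}\cap U$ maps onto all of $\overline{C_I}\cap B$. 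This should follow by taking $B$ small enough that each $W_I/T$ contains the preimage of $\overline{C_I}\cap B$, and then setting $U:=\phi^{-1}(B)\cap V$.
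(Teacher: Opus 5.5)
Your route is valid and differs from the paper only in the last step, but one step in your write-up, the choice of $U$, is unjustified as stated.

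\textbf{Comparison with the paper.} Your decomposition is the paper's: Lemma~\ref{local} gives $M_p=\bigcup_{I\in\mathcal I}Y_I$; the complexity-zero $T/K_I$-spaces $\overline{Y_I}\cap V$ are handled by Lemma~\ref{le:eqdarboux}; injectivity comes from Lemma~\ref{cor:linear} together with $\phi(Y_I\cap U)\subseteq C_I$. The difference is how you get continuity of the inverse.

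The paper never pins down the image. With $U=V\cap\bigcap_I U_I$ it shows directly that $\phi\colon M_p\cap U\to\overline{C_p}$ is open. Take $q\in Y_K\cap U$ and a neighbourhood $N$ of $q$. For $I\supseteq K$ it uses openness of $\overline{Y_I}\cap U\to\overline{C_I}$. For $I\not\supseteq K$ it notes that $\phi(q)\notin\overline{C_I}=\bigcup_{J\subseteq I}C_J$, so $\ft^*\smallsetminus\overline{C_I}$ is a harmless neighbourhood. A continuous open injection is then a homeomorphism onto an open set.

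You instead normalize $U$ so that each $(\overline{Y_I}\cap U)/T$ maps onto exactly $\overline{C_I}\cap B$, and paste the inverses over this finite closed cover. This is a legitimate alternative and gives the explicit image $\overline{C_p}\cap B$. It costs an extra normalization of $U$ that the paper's local openness argument avoids.

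\textbf{The unjustified step.} The choice $U:=\phi^{-1}(B)\cap V$, with $V$ the $T^\C$-invariant neighbourhood from Lemma~\ref{local}, does not follow from what you cite.
\begin{itemize}
\item Since $V$ is $T^\C$-invariant and $p\in\overline{Y_I}$, $V$ contains each entire orbit $Y_I$, including points far from $p$.
\item Lemma~\ref{le:eqdarboux} only describes the small set $W_I$. Nothing you invoke rules out points of $\overline{Y_I}\cap V$ outside $W_I$ whose moment images lie in $B$, possibly even in $B\smallsetminus\overline{C_I}$.
\item ``Properness near $p$'' does not address this, and $M$ is not assumed compact in this lemma.
\item The claim can in fact be proved by a global convexity argument along $T^\C$-orbits, but you have not given one.
\end{itemize}

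\textbf{The repair.} It uses only what you already have.
\begin{enumerate}
\item Keep the small $T$-invariant open $U_0\subseteq V$ with $U_0\cap\overline{Y_I}\subseteq W_I$ for all $I$.
\item Since $(U_0\cap\overline{Y_I})/T$ is open in $W_I/T$, its image under $\bar\phi$ is an open neighbourhood of $\phi(p)$ in $\overline{C_I}$.
\item Choose a ball $B$ about $\phi(p)$ with $\overline{C_I}\cap B$ contained in that image for every $I\in\mathcal I$, and set $U:=U_0\cap\phi^{-1}(B)$.
\item Then $\bar\phi$ maps $(\overline{Y_I}\cap U)/T$ homeomorphically onto $\overline{C_I}\cap B$, as the restriction of a homeomorphism to an open set.
\item These sets cover $(M_p\cap U)/T$ and are closed in it, since $\overline{Y_I}\cap U\subseteq M_p$ by Lemma~\ref{local}(2).
\end{enumerate}
With this $U$ your pasting-lemma argument goes through.
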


\begin{proof}
Let $H \subset T$ be the stabilizer of $p$ and  $\eta_0,\dots, \eta_h \in \fh^*$ be the isotropy weights at $p$.
Given $I \subseteq  \{0,\dots,h\}$, consider the polyhedral cone
\begin{equation*} 
\overline{ C_I} :=  \phi(p) + \mathfrak h^\circ +  \sum_{i \in I}  \R_{\geq 0 } \,  \eta_i
\end{equation*} 
with relative interior ${C_I} := \phi(p) + \mathfrak h^\circ +  \sum_{i \in I}  \R_{> 0 } \,  \eta_i.$ 
By Theorem~\ref{le:imagecone}, 
to prove the claim it is enough to find a $T$-invariant neighborhood $U$ of $p$
such that $\barphi$ restricts to injective open map from $(M_p \cap U)/T$ to  the moment
cone  $\overline{C_p} = \overline{ C_{\{0,\dots,h\}}}$.
 Let $\mathcal I$ be the set of $I \subseteq \{0,\dots,h\} $ such that $\{ \eta_i\}_{i \in I}$ is linearly independant.  
By Lemma~\ref{cor:linear}, the moment cone ${\overline{C_{p}}}  = \overline{ C_{\{0,\dots,h\}}}$ is the disjoint union of
 the open cones $\{C_I\}_{I \in \mathcal I}$, that is, 
\begin{equation}\label{eq:Ccup}
 \overline{C_{p}} = \cup_{I \in \mathcal I}\, C_I, \ \  \text{and}
\end{equation}
 \begin{equation}\label{IJempty}
C_I \cap C_J = \emptyset \text{ if } I \neq J \in \mathcal I.
\end{equation}

By Lemma~\ref{local} 
there is a $T^\C$-orbit $Y_I$ for each $I \in \mathcal I$ such that
\begin{equation}\label{eq:mpcup}
M_p  = \cup_{I \in \mathcal I} Y_I. 
\end{equation}Moreover, there exists a $T^\C$-invariant neighborhood $V$ of $p$ such that, for each $I \in \mathcal I$:
\begin{enumerate}
\item  $\overline{Y_I} \cap V$ is $T^\C$-equivariantly biholomorphic to $T^\C \times_{H^\C} \C^I$,
where  $H$ acts on $\C^I$  with weights $\{\eta_i\}_{i \in I}$.
\item  $\overline{Y_I} \cap V = \cup_{J \subseteq I} Y_J$. 
\end{enumerate}
Fix $I \in \mathcal I$. 
Since $\overline{Y_I} \cap V $ is a complex submanifold of a K\"ahler manifold,   it is also a symplectic submanifold.  Let $K_I = \cap_{i \in I} K_i$, where $K_i \subset H$ is the kernal of the homomorphism from $H$ to $S^1$ associated to $\eta_i$. Since $\{\eta_i\}_{i \in I}$ is linearly independent,   $\dim T/K_{I} = \frac{1}{2} \dim \overline{Y_I} \cap V $. 
Consider the induced effective $T/K_I$-action on  $\overline{Y_I} \cap V$. Identifying  $(\mathfrak h/\mathfrak k_I)^*$ with its image in $\mathfrak h^*$, the point $p$   in  $\overline{Y_I} \cap V$ has stabilizer $H/K_I$ and isotropy weights  $\{\eta_i\}_{i \in I}$,
 and so the moment cone of $p$ in $\overline{Y_I} \cap V$ is
$\overline{C_I}$. Moreover,  every point in $Y_I$ has trivial stabilizer.
Therefore,  by Lemma \ref{le:eqdarboux}  there is a $T$-invariant neighborhood $U_I$ of $p$ in $M$ such that  $\phi$ restricts to an open map from  $\overline{Y_I} \cap U_I$  to  $\overline{C_I}$;   moreover, the restriction $\bar{\phi} \colon (\overline{Y_I} \cap U_I)/T \to \overline{C_I}$ is injective and $\phi(Y_I \cap U_I) \subseteq C_I$.
The intersection $U = V \cap (\cap_{I  \in \mathcal I}  U_I) $ is a $T$-invariant open neighborhood of $p$ such that, for all $I \in \mathcal I$,  
\begin{gather}
\label{eq:resphiyi}
 \text{the restriction }\phi \colon \overline{Y_I} \cap U\to \overline{C_I}
  \text{ is open;} \\
\label{eq:indphiyi} \text{the restriction } \bar{\phi} \colon (\overline{Y_I} \cap U)/T \to \overline{C_I}
\text{ is injective;} \\
\label{eq:yicapu}\phi(Y_I \cap U) \subseteq C_I; \text{ and} \\
\label{eq:ybar} \overline{Y_I} \cap U =  \big(\cup_{J \subseteq I} Y_J \big) \cap U.\end{gather}

We claim that  $\phi$ restricts to an open map from  $M_p \cap U$ to $\overline{C_{p}}$.
To see this, fix $q \in M_p \cap U$ and an open neighborhood $N$ of $q$ in $M_p \cap U$. 
Then  $q \in Y_K \cap U$ for some $K \in \mathcal I$ by \eqref{eq:mpcup}.  Fix any $I \in \mathcal I$.
 If $K \subseteq I$ then $q \in \overline{Y_I} \cap U$  by \eqref{eq:ybar}, and so  there is an open neighborhood $W_I$   of $\phi(q)$ in $\ft^* $ such that $W_I   \cap \overline{C_I} \subseteq \phi(\overline{Y_I} \cap N)$ by \eqref{eq:resphiyi}.
 If $K \not\subseteq I$ then since $\overline{C_I} = \cup_{J \subseteq I} C_J$ 
 equations
\eqref{IJempty} and \eqref{eq:yicapu} together imply that $\phi(q) \not\in \overline{C_I}$.
Therefore,  $W_I := \ft^* \smallsetminus \overline{ C_I}$ is an open neighborhood
of $\phi(q)$  such that   $W_I \cap \overline{C_I} = \emptyset \subseteq  \phi(\overline{Y_I} \cap N).$
 The intersection $W:=\cap_{I \in \mathcal I}W_I$ is an open neighborhood of $\phi(q)$ in $\ft^*$
such that $W \cap \overline{C_I} \subseteq \phi(\overline{Y_I} \cap N)$ for all $i \in \mathcal{I}$.
Hence, by  \eqref{eq:Ccup}, \eqref{eq:ybar}, and \eqref{eq:mpcup},
 $$W \cap \overline{C_{p}} = \cup_{I \in \mathcal I} W \cap \overline{C_{I}}\subseteq \phi(\cup_{I \in \mathcal I}\overline{Y_I} \cap N)= \phi(\cup_{I \in \mathcal I} Y_I \cap N) =
\phi(M_p \cap N).$$ 
 The claim follows immediately.

 By the paragraph above,  the orbital moment map
$\barphi$ restricts to an open map from $(M_p \cap U)/T$ to $\overline{C_{p}}$.
Additionally, 
  by \eqref{eq:indphiyi} and $\eqref{eq:yicapu},$ the  map $\bar{\phi}$ restricts to an injection
from $(Y_I \cap U)/T$ to $C_I$  for each $I \in \mathcal I.$  Therefore, $\bar{\phi} \colon (M_p \cap U)/T \to   \overline{C_{p}}$ is an injection by \eqref{IJempty} and  \eqref{eq:mpcup}.The claim follows immediately.

\end{proof}

We now prove our main proposition.

\begin{proposition}
\label{localhomeob}
Let $\tka$ be a compact  K\"ahler complexity one Hamiltonian $T$-manifold.
The orbital moment map $\bar{\phi} \colon M/T \to \ft^*$ restricts to a local homeomorphism
\begin{equation} \label{eq:final3}
    \bar{\phi} \colon {(\overline{\Mcore} \cap \Mtall)}/T \to \phi(\Mtall).
\end{equation}
\end{proposition}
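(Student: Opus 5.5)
The plan is to combine Lemma~\ref{finite2} and Lemma~\ref{homeotall} locally, then use Theorem~\ref{le:imagecone} to pass from the moment cone to $\phi(\Mtall)$.

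Fix $p \in \overline{\Mcore} \cap \Mtall$. By Lemma~\ref{finite2} there is a $T^\C$-invariant (hence $T$-invariant) open neighborhood $V$ of $p$ with $\overline{\Mcore} \cap V = M_p$. By Lemma~\ref{homeotall} there is a $T$-invariant neighborhood $U$ of $p$ such that $\bar\phi$ restricts to a homeomorphism from $(M_p \cap U)/T$ onto an open subset $O$ of $\overline{C_p}$. Replacing $U$ by $U \cap V$ (still $T$-invariant and open), we get $\overline{\Mcore} \cap U = M_p \cap U$. The restriction of a homeomorphism onto an open set to an open subset remains such a homeomorphism, so this replacement is harmless.

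Next we shrink further so that every point involved is tall. The set $\Mtall$ is open in $M$: by the local normal form (Theorem~\ref{thm:lnf}) and Lemma~\ref{lem:KT-pos-combo}, tallness near $p$ is governed by the local model, and nearby points have stabilizers and slice representations obtained from those of $p$. Alternatively, the short points form a closed set, since they are isolated in their reduced spaces, which is stable under limits in the compact case. If this openness is not immediate, we argue instead via Lemma~\ref{local}: every point of $M_p$ near $p$ lies in some $Y_I$ with $I \in \mathcal I$. Its slice representation contains a weight-zero summand or has independent weights, so by Remark~\ref{rem:xii} the point is tall. Either way, we may assume $M_p \cap U \subseteq \Mtall$, so $W := (M_p\cap U)/T$ is an open neighborhood of $[p]$ in $(\overline{\Mcore}\cap\Mtall)/T$.

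It remains to show that $\bar\phi(W) = O$ is open in $\phi(\Mtall)$. By Theorem~\ref{le:imagecone} there is a neighborhood $N$ of $\phi(p)$ with $\phi(M)\cap N = \overline{C_p}\cap N$. Shrinking $U$ so that $\phi(U) \subseteq N$, we have $O \subseteq \overline{C_p}\cap N = \phi(M)\cap N$. Since $O$ is open in $\overline{C_p}$, write $O = \overline{C_p} \cap G$ with $G$ open in $\ft^*$. Then $O = \phi(M)\cap N \cap G$, which is open in $\phi(M)$. Moreover $O \subseteq \phi(\Mtall) \subseteq \phi(M)$, so $O$ is open in $\phi(\Mtall)$. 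This gives the local homeomorphism.

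The main obstacle is ensuring that the final shrinking of $U$ (to land in $N$ and to keep only tall points) preserves the homeomorphism-onto-open conclusion of Lemma~\ref{homeotall}. Because $\bar\phi$ restricted to $(M_p\cap U)/T$ is a homeomorphism onto its image, restricting it to a smaller open $T$-invariant set again yields a homeomorphism onto an open subset of $O$. Openness of $\Mtall$ is the only external point, handled as above.
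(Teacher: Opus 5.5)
This is correct and is essentially the paper's proof: you shrink the neighborhood from Lemma~\ref{homeotall} into the one from Lemma~\ref{finite2} and into $\Mtall$, then use Theorem~\ref{le:imagecone} to turn openness in $\overline{C_p}$ into openness in $\phi(\Mtall)$. The paper simply cites \cite[Lemma 5.4 and Corollary 6.3]{Tall1} for the openness of $\Mtall$, and your fallback justification for it is inaccurate as phrased: a point of $Y_I$ need not have a zero weight, and $k+1$ weights in a $k$-dimensional $\mathfrak k^*$ are never independent. The conclusion still holds, because any relation among the slice weights $\eta_j|_{\mathfrak k_I}$, $j \notin I$, at a point of $Y_I$ lifts to a relation among $\eta_0,\dots,\eta_h$ and so is a multiple of the nonnegative vector $\xi$ at $p$.
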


\begin{proof}
By  \cite[Lemma 5.4 and Corollary 6.3]{Tall1}, 
 $\Mtall \subseteq M$ is open. Hence, by Theorem~\ref{le:imagecone},
 it is enough to prove each $p \in  \overline{\Mcore} \cap \Mtall$
 has a $T$-invariant neighborhood $U$ such that the orbital moment map $\overline{\phi}$ restricts
to  a homeomorphism from $(\overline{\Mcore} \cap U)/T$ to  an open subset of  the moment
cone $\overline{C_p}$ of $p$.  But this is an immediate consequence of Lemmas~\ref{finite2} and \ref{homeotall} 
\end{proof}

\section{Proof of Theorem \ref{MainThm}}\label{sec:proof}

In this section, we
prove our main theorem, Theorem~\ref{MainThm}. 
Given a  compact, connected K\"ahler complexity one  Hamiltonian $T$-manifold $(M,\omega,J,\phi)$,  let $\Delta:=\phi(M)$ and $\Delta_{\tall}:=\phi(M_\tall)$. 
In the previous section, we proved that the orbital moment 
map $\overline{\phi}$ restricts to a local homeomorphism
$\bar \phi \colon (\overline{\Mcore} \cap \Mtall)/T \to \Delta_\tall$.
In this section,  we first show that
$\overline{\phi}$ restricts to a (global) homeomorphism
from each connected component of $(\overline{\Mcore} \cap \Mtall)/T$ to $\Delta_\tall$.
Since $\Mexc \cap \Mtall \subseteq \Mcore$,
this will allow us to prove that the painting is trivial.

\begin{proposition}\label{prop:all1}
Let $\tka$ be a  compact, connected K\"ahler complexity one  Hamiltonian $T$-manifold.
Let $A_0$ be a connected component of  $(\overline{\Mcore} \cap \Mtall)/T$.
The  orbital moment map $\bar{\phi} \colon M/T \to \ft^*$ restricts to a homeomorphism from $A_0$
to $ \Delta_\tall$.
\end{proposition}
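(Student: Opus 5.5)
The plan is to upgrade the local homeomorphism of Proposition~\ref{localhomeob} to a covering map, and then use that $\Delta_\tall$ is convex (Lemma~\ref{deltatall}), hence simply connected, to conclude that each connected component maps homeomorphically.

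\emph{Step 1: properness.} First I will check that $\bar\phi \colon (\overline{\Mcore}\cap\Mtall)/T \to \Delta_\tall$ is proper. The set $\Mtall$ is the preimage of its image: tallness depends only on the fiber $\phi^{-1}(\alpha)$, and for $\alpha \in \Delta_\tall$ the reduced space is a surface, so every point of that fiber is tall. (One could also quote \cite{Tall1}, where non-tall fibers consist of a single orbit.) Hence $\Mtall = \phi^{-1}(\Delta_\tall)$. Let $K \subseteq \Delta_\tall$ be compact. Then $\phi^{-1}(K)\cap\overline{\Mcore}$ is a closed subset of the compact space $M$, so it is compact. Its image in $M/T$ is therefore compact, and $\bar\phi$ is proper on $(\overline{\Mcore}\cap\Mtall)/T$. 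The spaces involved are Hausdorff, being quotients of a compact group action.

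\emph{Step 2: covering map.} A proper local homeomorphism between locally compact Hausdorff spaces is a covering map onto a union of components of the target, with finite fibers. Since $\Delta_\tall$ is convex and therefore connected, the image of $A_0$ is open, because $\bar\phi$ is a local homeomorphism and $A_0$ is open in the locally connected space $(\overline{\Mcore}\cap\Mtall)/T$. The image is also closed in $\Delta_\tall$ by properness. Moreover $A_0$ is nonempty, so $\bar\phi(A_0) = \Delta_\tall$. The restriction of $\bar\phi$ to $A_0$ is again proper, since $A_0$ is closed, and it is a local homeomorphism. Hence $\bar\phi|_{A_0}\colon A_0\to\Delta_\tall$ is a finite covering map.

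\emph{Step 3: conclusion.} The space $\Delta_\tall$ is convex, so it is contractible and locally path connected. A connected covering space of a simply connected, locally path connected space is a one-sheeted cover. To apply this, $A_0$ must be path connected. This follows because $A_0$ is connected and locally homeomorphic to open subsets of the convex set $\Delta_\tall$, hence locally path connected. Therefore $\bar\phi|_{A_0}$ is bijective, and a bijective covering map is a homeomorphism.

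The main obstacle is Step 1, specifically justifying $\Mtall = \phi^{-1}(\Delta_\tall)$, which is needed for properness. I will try to derive it from \cite[Proposition 1.2]{Tall3}: for $\alpha\in\Delta_\tall$ the whole reduced space $\phi^{-1}(\alpha)/T$ is a connected surface, so it has no isolated points, and hence no point of the fiber is short. If that citation is not precise enough, I will instead use that $\Mtall$ is open and that its complement, the short points, consists of fibers that are single orbits.
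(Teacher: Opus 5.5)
Your proposal is correct and matches the paper's proof: both get a local homeomorphism from Proposition~\ref{localhomeob}, properness from $\Mtall=\phi^{-1}(\Delta_\tall)$ and compactness of $M$, path-connectedness of $A_0$, and simple connectivity of the convex set $\Delta_\tall$. The only difference is at the end, where the paper cites Ho's theorem (a proper local homeomorphism from a path-connected Hausdorff space to a simply connected Hausdorff space is a homeomorphism) and you instead reprove it with covering-space theory. On that route, your unstated local compactness of $\Delta_\tall$ does hold, since $\Delta_\tall$ is open in the polytope $\Delta$; alternatively, metrizability of the target already makes proper maps closed.
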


\begin{proof}
By  Proposition~\ref{localhomeob},
the orbital moment map $\bar \phi \colon M/T \to \ft^*$ restricts to
a local homeomorphism $\bar \phi \colon A_0 \to \Delta_\tall$.
Moreover, since the fibers of $\phi$ are connected, $\phi^{-1}(\Delta_\tall) = M_\tall$.
Since $A_0$ is a closed subset of $\Mtall/T$ and $\phi \colon M \to \ft^*$ is proper, this implies that the restriction $\bar \phi \colon A_0 \to \Delta_\tall$ is proper as a map to its image.
The spaces $A_0 \subseteq M/T$ and  $\Delta_\tall \subseteq \ft^*$ are both Hausdorff.
By Lemma~\ref{deltatall}, $\Delta_\tall$  is simply connected  and locally path connected.
Since $A_0$ is connected and $\bar \phi \colon A_0 \to \Delta_\tall$ is a local homeomorphism,  $A_0$ is path connected.
Therefore, the claim follows from   \cite[Theorem 2]{HoProperMaps}, which states that every proper local homeomorphism from a path-connected Hausdorff space to a simply connected Hausdorff space is a global homeomorphism.
\end{proof}

 We are now ready to prove our main theorem.

\begin{proof}[Proof of Theorem \ref{MainThm}]
Let $\tka$ be a  compact, connected  complexity one K\"ahler Hamiltonian $T$-manifold
of genus $g$.
Let $f \colon (\Mexc \cap \Mtall)/T \rightarrow \Sigma$ be a painting associated to $M$,
 where $\Sigma$ is a closed surface of genus $g$.
 By definition, $f$ is the restriction of a map $F \colon \Mtall/T \rightarrow \Sigma$  that induces 
 a homeomorphism
$$(\bar{\phi}, F) \colon \Mtall/T \longrightarrow \Delta_{\tall}\times \Sigma, $$
and  that preserves the orientation on each reduced space $\phi^{-1}(\alpha)/T$. 

Let $\{A_i\}_{i \in \mathcal I}$ be the set of connected components of $A := (\overline{\Mcore} \cap \Mtall)/T \subseteq M/T$. 
By Proposition \ref{prop:all1}, 
the orbital moment map  $\bar{\phi} \colon M/T \to \ft^*$ restricts to a homeomorphism 
from  $A_i$ to $\Delta_{\tall}$ for all $i \in \mathcal{I}.$
Hence, there exists an inverse map
$s_i \colon \Delta_{\tall}\rightarrow A_i \subseteq A$.
Since  $\Delta_\tall$ is contractible  by Lemma \ref{deltatall}, there exists a deformation retract
	$h_t \colon \Delta_{\tall} \to\Delta_{\tall}, \, \,  t \in [0,1].$
	Define a homotopy
	$H_t \colon A  \to  \Sigma,\,\, t \in [0,1],$
	by 
	$$H_t(a) =F(s_i(h_t(\bar{\phi}(a))))$$
	for  all $t \in [0,1]$, $i \in \mathcal I,$ and $a \in A_i$.
   Then $H_0 =  F\vert_A$, $H_1$ is locally constant, and 
	$$(\bar{\phi}, H_t) \colon A  \longrightarrow \Delta_{\tall}\times \Sigma $$
	is injective for all $t\in [0,1]$.  To  check injectivity of $(\bar{\phi}, H_t)$,  let $a,a'\in A$ be  
	points 
	 such that
	$$\bar{\phi}(a)=\bar{\phi}(a') \quad \text{and} \quad H_t(a)=H_t(a').$$
 There exist $i,j\in \mathcal I$ such that $a\in A_i$ and 
	$a'\in 
	A_j$.   For  $b = s_i(h_t(\bar{\phi}(a))) \in A_i$ and $b' = s_j(h_t(\bar{\phi}(a'))) \in A_j,$  we have
\begin{gather*}  
  \bar{\phi}(b) = h_t(\bar{\phi}(a)) = h_t(\bar{\phi}(a')) = \bar{\phi}(b') \quad \text{and} \\
  F(b)= F(s_i(h_t(\bar{\phi}(a)))=H_t(a) = H_t(a') = F(s_i(h_t(\bar{\phi}(a'))))= F(b').
\end{gather*}
 Since $(\bar{\phi},F)$ is 
	injective, 
	this implies that $b=b'$;  in particular, $i = j$. Since $\bar{\phi}|_{A_i} \colon A_i \to \Delta_{\tall}$ is injective,
 this implies that $a = a'$.

Finally,  by definition, every tall exceptional orbit lies in $\Mcore$, and so $(\Mexc \cap \Mtall)/T \subseteq A$.
Hence, $H_t\vert_{ (\Mexc \cap \Mtall)/T} \to \Sigma$ is a homotopy through paintings
from the given painting $f \colon (\Mexc \cap \Mtall)/T \to \Sigma$ to a locally constant painting. Therefore, $f$ is trivial.
\end{proof}


\begin{thebibliography}{9}

\bibitem{AH}
K. Altmann and J. Hausen,
\emph{Polyhedral divisors and algebraic torus actions},
{Math.\ Ann.} {\bf 334} (2006), no.\ 3, 557--607.

\bibitem{AHS}
K. Altmann, J. Hausen, and H. S\"uss,
\emph{Gluing affine torus actions via divisorial fans},
Transform.\ Groups.\ {\bf 13} (2008), no.\ 2, 215--242.



\bibitem{Atiyah} M.F. Atiyah, 
\emph{Convexity and commuting Hamiltonians}, 
Bull.\ London Math.\ Soc.\ \textbf{14} (1982), no. 1, 1--15.	


	
\bibitem{Delzant}
T. Delzant, \emph{Hamiltoniens p\'eriodiques et images convexes de l'application moment}, Bull.\ Soc.\ Math.\ 
France \textbf{116} (1988), no. 3, 315--339.

\bibitem{Gompf}
R. Gompf, \emph{A New Construction of Symplectic Manifolds},
Ann.\ Math.\
 {\bf 142} (1995), no. 3, 527--595.
	
\bibitem{GSconvex}
V. Guillemin and  S. Sternberg,  \emph{Convexity properties of the moment mapping}, Invent.\ Math.\ \textbf{67} (1982), no. 3, 491--513.
	
	
\bibitem{GSNormalForm} V. Guillemin and S. Sternberg, \emph{A normal form for the moment map}, Differential
geometric methods in mathematical physics (Jerusalem, 1982), Math.\ Phys.\
Stud.\ \textbf{6}, Reidel, Dordrecht, 1984, 161--175.	
	
	
	
	
\bibitem{HoProperMaps} C.-W. Ho, \emph{A note on proper maps}, 
Proc.\ Amer.\ Math.\ Soc.\ \textbf{51} (1975), 237--241.

    
\bibitem{IshidaKarshon} H. Ishida and Y. Karshon, \emph{Completely integrable torus actions on complex manifolds with 
fixed points},	 Math.\ Res.\ Lett.\ \textbf{19} (2012), no. 6, 1283--1295.



	
	\bibitem{Karshon4}
	Y. Karshon,
	\emph{Periodic Hamiltonian flows on four dimensional manifolds},
	Mem.\ Amer.\ Math. Soc.\ \textbf{141} (1999), viii+71.
	

	
	
	\bibitem{Tall1} Y. Karshon and S. Tolman, \emph{Centered complexity one Hamiltonian torus actions},
	Trans.\ Amer.\ Math.\ Soc.\ \textbf{353} (2001), no. 12, 4831--4861.
	
	\bibitem{Tall2} Y. Karshon and S. Tolman, \emph{Complete invariants for Hamiltonian torus actions
		with two dimensional quotients}, J.\ Symplectic Geom.\ \textbf{2} (2003), no. 1, 25--82.
	
	\bibitem{Tall3} Y. Karshon and S. Tolman, \emph{Classification of Hamiltonian torus actions with two dimensional 
		quotients}, Geom.\ Topol.\ \textbf{18} (2014), no. 2, 669--716.
	
	\bibitem{Tall4} Y. Karshon and S. Tolman, \emph{Topology of complexity one quotients}, Pac.\ J.\ Math.\ \textbf{308} (2020), no. 2, 333--346.
	
\bibitem{Lerman} E. Lerman \emph{A compact symmetric symplectic non-Kaehler manifold}, Math.\ Res.\ Lett.\ {\bf 3} (1996), no. 5, 587--590. 	
	
	\bibitem{LMTW}
E. Lerman, E. Meinrenken, S. Tolman, and C. Woodward, 
\emph{Non-abelian convexity by symplectic cuts}, Topology \textbf{37} (1998), no. 2, 245--259.

\bibitem{LPT} Y. Liu, J. Palmer, and S. Tolman,
\emph{Extending complexity one spaces to symplectic toric manifolds},
work in progress.

	\bibitem{Marle} C.M. Marle, 
\emph{Mod\`ele d'action hamiltonienne d'un groupe de Lie sur une vari\'et\'e symplectique},
Rend.\ Sem.\ Mat.\ Univers.\ Politecn.\ Torino
\textbf{43} (1985), no. 2, 227--251. 	

\bibitem{McDuff} D.\ McDuff, {\em Examples of simply-connected symplectic non-K\"ahlarian manifolds}, J.\ Diff.\ Geo.\  {\bf 20} (1984), 267--277.	

\bibitem{Rock}
R.T. Rockafellar, \emph{Convex Analysis}, Princeton University Press, 1970.

   
	
	
	\bibitem{Sjamaar2} 
		R. Sjamaar,
		\emph{Holomorphic slices, symplectic reduction and multiplicities of representations}, Ann.\ Math.\ (2) {\bf 141} (1995), no.\ 1,  
87--129.

\bibitem{Sjamaar} 
R. Sjamaar,
\emph{Convexity properties of the moment mapping re-examined},
Adv.\ Math.\  \textbf{138} (1998), 46--91.




\bibitem{Thurston}
W. Thurston, \emph{Some simple examples of symplectic manifolds}, Proc.\ Amer.\ Math.\ Soc.\ {\bf 55}
(1976), 467--468.

\bibitem{Timashev} 
D. Timashev, 
\emph{Torus actions of complexity one},  
Harada, Karshon, Masuda, and Panov (ed.), Toric topology. 
International conference,
Osaka, Japan, May 28-June 3, 2006. Providence, RI: American
Mathematical Society (AMS). Contemporary Mathematics {\bf 460} (2008),
349--364. 

\bibitem{TolmanExample} S. Tolman, \emph{Examples of non-K\"ahler Hamiltonian torus actions}, Invent.\ Math.\ {\bf 131} 
(1998), 299--310.
	
	
	


\end{thebibliography}
\end{document}